\documentclass[a4paper]{amsart}
\usepackage[english]{babel}
\usepackage[utf8]{inputenc}
\usepackage{amsmath, amssymb, amsthm}

\newtheorem{theorem}{Theorem}
\newtheorem{definition}{Definition}
\theoremstyle{plain} 
\newtheorem{corollary}{Corollary}

\newtheorem{lemma}{Lemma}
\newtheorem{proposition}{Proposition}

\newtheorem{example}{Example}

\newtheorem{remark}{Remark}

\newtheorem{notation}{Notation}

\newcommand{\N}{{\mathbb{N}}}
\newcommand{\C}{{\mathbb{C}}}

\newcommand{\Z}{{\mathbb{Z}}}
\newcommand{\R}{{\mathbb{R}}}

\title{The Pascal matrix in the multivariate Riordan group}

\author{Helena Cobo}

\begin{document}

\begin{abstract}
We generalize the concept of Pascal matrices to matrices associated with sets of points $R\subseteq\Z^n_{\geq 0}$ by considering multidimensional binomial coefficients as entries. We study their properties and prove that the infinite matrix associated with the set $R=\Z^n_{\geq 0}$ is in fact an element of the multivariate Riordan group.
\end{abstract}

\maketitle

\section*{Introduction}
Though the Pascal triangle is present in mathematical texts since ancient times, it is not till recently that Pascal matrices were studied with some detail. Probably the first reference devoted to Pascal matrices is \cite{Lunnon}. A good introduction to this topic is \cite{ES18}, see also the references \cite{CV}, \cite{AcetoTrigiante}, \cite{BL} or \cite{Hiller}, where Pascal matrices are presented from different points of view.

\vspace{2mm}

The (classical) Pascal matrices are matrices whose entries are binomial coefficients. More precisely:
\begin{definition}
For $n\in\N$, the squared matrices
\[L_n=\left(\binom{i}{j}\right)_{0\leq i,j\leq n},\ \ U_n=L_n^T\ \mbox{ and }S_n=\left(\binom{i+j}{i}\right)_{0\leq i,j\leq n}\in\mathcal M_{(n+1)\times(n+1)}(\N)\]
are the lower-triangular, upper-triangular and symmetric Pascal matrices of order $n+1$, respectively.

We can also consider the respective matrices of infinite order. In fact, the infinite lower-triangular matrix
\[\left(\binom{i}{j}\right)_{0\leq i,j<\infty}\]
is called the (classical) Pascal matrix. It is also known as the binomial matrix.
\label{Pascalcl}
\end{definition}

There are several generalizations of this notion in the literature. For instance, in \cite{CV} (see also \cite{Z}) the generalized Pascal matrix of order $n+1$ is defined as
\[\left(x^{i-j}\binom{i}{j}\right)_{0\leq i,j\leq n}\]
considering its entries in $\Z[x]$. See also \cite{Barry}, \cite{WW} and \cite{MikCheon} for more possible generalizations.

Another generalizations appear by considering alternative definitions of the Pascal triangle, yielding for instance to the Hoggatt triangles (see \cite{FA}). See \cite{Bur} for yet another type of generalized Pascal matrices, with a more sophisticated definition of binomial coefficients.

In this paper we present an alternative generalization of Pascal matrices. We define the multivariate Pascal matrices as matrices whose entries are multidimensional binomial coefficients.

We came across the multivariate Pascal matrix by studying linear systems defined by matrices whose entries are Stirling polynomials as defined in \cite{Cobo1}. These linear systems appear when computing b-functions with respect to weights of certain holonomic ideals. The key to solve such linear systems is the decomposition given in Theorem \ref{Decomp}.

We introduce the Pascal matrices in the first section and study some of its properties in the second section. The last section is devoted to the multivariate Riordan group. This group is defined in terms of $n+1$ formal power series in $\C[[x_1,\ldots,x_n]]$ as a generalization of the classical (univariate) Riordan group, which is defined in terms of two formal power series in $\C[[x]]$. We prove that the infinite Pascal matrix is an element of the multivariate Riordan group.

\vspace{5mm}

{\bf Acknowledgment:} I want to thank Sofía Cobo for her careful reading of a first draft of this paper. I also want to thank the referee for his sensible comments, especially those concerning the definition of the multivariate Riordan group.

\section{Multivariate Pascal matrices}
Before defining the Pascal matrices let us set some notations.
\begin{notation}
For any ${\bf k}=(k_1,\ldots,k_n)\in\Z^n_{\geq 0}$ we use the standard notations
\[\begin{array}{l}
|{\bf k}|=k_1+\cdots+k_n,\\
\\
{\bf k}!=k_1!\cdots k_n!.\\
\end{array}\]
For ${\bf i},{\bf k}\in\Z^n_{\geq 0}$, we write ${\bf i}\leq{\bf k}$ to indicate the partial order on $\Z^n_{\geq 0}$ defined as
\[0\leq i_j\leq k_j\ \mbox{ for }\ 1\leq j\leq n.\]
By ${\bf i}<{\bf k}$ we mean ${\bf i}\leq{\bf k}$ and ${\bf i}\neq{\bf k}$.

The set $\{{\bf e_1},\ldots,{\bf e_n}\}$ denotes the standard basis of $\R^n$. We denote by ${\bf 0}$ the origin $(0,\ldots,0)$ of $\Z^n$, and by ${\bf 1}$ the vector $(1,\ldots,1)\in\Z^n$.

In this context the Kronecker delta of two vectors is defined as
\[\delta_{{\bf k}{\bf k'}}=\delta_{k_1k_1'}\cdots\delta_{k_nk_n'}.\]

\vspace{3mm}

For variables $(x_1,\ldots,x_n)$ we denote by ${\bf x}^{\bf k}$ the monomial $x_1^{k_1}\cdots, x_n^{k_n}$.

\end{notation}

\vspace{3mm}

The generalization of the Pascal matrices lies in the generalized binomials
\[\binom{\bf k}{\bf i}=\binom{k_1}{i_1}\cdots\binom{k_n}{i_n}, \mbox{ for }{\bf i}\leq{\bf k}.\]
The combinatorial interpretation of these numbers is the following. Suppose we have $n$ sets $S_1,\ldots,S_n$ with cardinal $|S_i|=k_i$. Then for ${\bf i}\leq{\bf k}$, the number $\binom{\bf k}{\bf i}$ equals the number of ways in which we can select $i_j$ different elements from the set $S_j$, for $j=1,\ldots,n$.

\vspace{3mm}

Next lemma illustrates the analytic interpretation of the multidimensional binomial coefficients.

\begin{lemma}
The multidimensional binomials $\binom{\bf k}{\bf k'}$ appear in the expansions:
\begin{enumerate}
\item[(i)]
\[(1+x_1)^{k_1}\cdots(1+x_n)^{k_n}=\sum_{{\bf k'}\leq{\bf k}}\binom{\bf k}{\bf k'}{\bf x}^{\bf k'},\]
\item[(ii)]
\[\frac{1}{(1-x_1)^{k_1}}\cdots\frac{1}{(1-x_n)^{k_n}}=\sum_{{\bf k'}\in\Z_{\geq 0}^n}\binom{{\bf k}+{\bf k'}-{\bf 1}}{\bf k'}{\bf x}^{\bf k'},\]
\item[(iii)]
\[\frac{1}{{\bf k}!}\frac{\partial^{|{\bf k}|}}{\partial^{k_1}x_1\cdots\partial^{k_n}x_n}\big({\bf x}^{\bf k'}\big)=\binom{\bf k'}{\bf k}{\bf x}^{{\bf k'}-{\bf k}}.\]
\end{enumerate}
\label{anBin}
\end{lemma}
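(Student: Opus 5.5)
The plan is to reduce each of the three identities to its one-variable counterpart. The multidimensional binomial $\binom{\bf k}{\bf i}$ factors as a product over coordinates, and each left-hand side factors in the same way. Since all three expressions are products in the separate variables $x_1,\ldots,x_n$, I will prove the univariate statement in each case and then multiply the $n$ expansions together, using distributivity (a Cauchy product of formal power series in distinct variables).

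For (i), I start from the classical binomial theorem $(1+x_j)^{k_j}=\sum_{i_j=0}^{k_j}\binom{k_j}{i_j}x_j^{i_j}$. Taking the product over $j$ and expanding gives a sum over all tuples ${\bf k'}=(i_1,\ldots,i_n)$ with $0\le i_j\le k_j$, which is exactly the condition ${\bf k'}\le{\bf k}$. The coefficient of ${\bf x}^{\bf k'}$ is $\prod_j\binom{k_j}{i_j}=\binom{\bf k}{\bf k'}$.

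For (ii), I use the univariate negative binomial series $(1-x)^{-k}=\sum_{m\ge0}\binom{k+m-1}{m}x^m$ in $\C[[x]]$. This can be proved by induction on $k$, by differentiating the geometric series $k-1$ times, or by counting multisets of size $m$ from $k$ elements. Multiplying over $j$ gives coefficient $\prod_j\binom{k_j+k'_j-1}{k'_j}=\binom{{\bf k}+{\bf k'}-{\bf 1}}{\bf k'}$ for ${\bf x}^{\bf k'}$. The one point needing care is the degenerate case $k_j=0$, where the factor is $1$. To handle it, I adopt the convention $\binom{m-1}{m}=\delta_{m0}$, i.e.\ $\binom{-1}{0}=1$ and $\binom{m-1}{m}=0$ for $m\geq1$. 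This is consistent with the generalized binomial $\binom{a}{m}=a(a-1)\cdots(a-m+1)/m!$. I expect this convention issue, rather than any real computation, to be the only obstacle, and I will state it explicitly. Note that here ${\bf k}+{\bf k'}-{\bf 1}\geq{\bf k'}$ fails in that case, so the generalized binomial has to be read in this extended sense.

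For (iii), the mixed partial derivative of a monomial factors as $\prod_j \frac{\partial^{k_j}}{\partial x_j^{k_j}}x_j^{k'_j}$. In one variable, $\frac{1}{k!}\frac{d^k}{dx^k}x^{k'}=\frac{k'(k'-1)\cdots(k'-k+1)}{k!}x^{k'-k}=\binom{k'}{k}x^{k'-k}$. This vanishes when $k>k'$, consistent with $\binom{k'}{k}=0$ there. Multiplying these and dividing by ${\bf k}!=\prod_j k_j!$ gives $\binom{\bf k'}{\bf k}{\bf x}^{{\bf k'}-{\bf k}}$, with the understanding that the right-hand side is zero unless ${\bf k}\le{\bf k'}$.
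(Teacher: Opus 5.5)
Your proposal is correct and takes the same approach as the paper: reduce each of (i)--(iii) to its one-variable identity and multiply over the coordinates. Your explicit convention $\binom{m-1}{m}=\delta_{m0}$ for the case $k_j=0$ in (ii) covers a point the paper leaves open, and it is worth keeping, since the paper's later convention $\binom{\bf k}{\bf k'}=0$ for ${\bf k'}\not\leq{\bf k}$ would give the wrong value $\binom{-1}{0}=0$ there.
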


{\em Proof.}
Equation in (i) is a straightforward generalization of the binomial identity $(1+x)^n=\sum_{k=0}^n\binom{n}{k}x^k$, while equation in (ii) generalizes
the identity $\frac{1}{(1-x)^n}=\sum_{k=0}^\infty\binom{n+k-1}{k}x^k$ (see for instance \cite{Riordanbook}). Finally (iii) is a straightforward generalization of $\frac{\partial x^a}{\partial x^b}=\frac{a!}{(a-b)!}x^{a-b}$.
\hfill$\Box$

\vspace{3mm}

The following properties will be very useful.

\begin{lemma}
Let ${\bf k},{\bf j}\in\Z^n_{\geq 0}$ with ${\bf j}\leq{\bf k}$, then
\begin{equation}
\sum_{{\bf j}\leq{\bf i}\leq{\bf k}}\binom{\bf k}{\bf i}\binom{\bf i}{\bf j}(-1)^{|{\bf i}|-|{\bf j}|}=\delta_{{\bf j}{\bf k}}.
\label{eqbinom}
\end{equation}
Moreover, for ${\bf k},{\bf r}\in\Z^n_{\geq 0}$, the Vandermonde identity extends to
\begin{equation}
\sum_{{\bf k'}\in\Z^n_{\geq 0},\ {\bf k'}\leq{\bf k}}\binom{\bf r}{\bf k'}\binom{\bf k}{\bf k'}=\binom{{\bf r}+{\bf k}}{\bf k},
\label{eqVand}
\end{equation}
where we convey that
\[\binom{\bf k}{\bf k'}=0\ \ \mbox{ if }\ {\bf k'}\not\leq{\bf k}.\]

\end{lemma}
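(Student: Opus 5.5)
Both identities factor coordinatewise, so the plan is to reduce each to its classical one-variable version and then take the product over the $n$ coordinates.

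\textbf{Identity (\ref{eqbinom}).} By definition,
\[
\binom{\bf k}{\bf i}\binom{\bf i}{\bf j}(-1)^{|{\bf i}|-|{\bf j}|}=\prod_{m=1}^n\binom{k_m}{i_m}\binom{i_m}{j_m}(-1)^{i_m-j_m}.
\]
The summation range ${\bf j}\leq{\bf i}\leq{\bf k}$ is the product of the intervals $j_m\leq i_m\leq k_m$. Hence the left-hand side equals
\[
\prod_{m=1}^n\sum_{i=j_m}^{k_m}\binom{k_m}{i}\binom{i}{j_m}(-1)^{i-j_m}.
\]
For a single factor, use $\binom{k}{i}\binom{i}{j}=\binom{k}{j}\binom{k-j}{i-j}$. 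This turns the factor into
\[
\binom{k}{j}\sum_{l=0}^{k-j}\binom{k-j}{l}(-1)^l=\binom{k}{j}(1-1)^{k-j}=\delta_{jk}.
\]
Alternatively, one can compare coefficients of $x^{j}$ on both sides of $((1+x)-1)^k=x^k$, expanding via Lemma \ref{anBin}(i). Either way, taking the product over $m$ gives $\prod_m\delta_{j_mk_m}=\delta_{{\bf j}{\bf k}}$.

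\textbf{Identity (\ref{eqVand}).} Again the summand factors as $\prod_m\binom{r_m}{k'_m}\binom{k_m}{k'_m}$. The range ${\bf k'}\leq{\bf k}$ is a product of the ranges $0\leq k'_m\leq k_m$, so the sum equals
\[
\prod_m\sum_{t=0}^{k_m}\binom{r_m}{t}\binom{k_m}{t}.
\]
For each factor, rewrite $\binom{k}{t}=\binom{k}{k-t}$ and compare coefficients of $x^{k}$ in $(1+x)^r(1+x)^k=(1+x)^{r+k}$, using Lemma \ref{anBin}(i) in one variable. This yields $\binom{r+k}{k}$. One must check that terms with $t>r$ vanish by the stated convention, so no restriction $t\leq r$ is needed. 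The product over $m$ is then $\binom{{\bf r}+{\bf k}}{\bf k}$. Alternatively, the multivariate version of Lemma \ref{anBin}(i) applied to $\prod(1+x_m)^{r_m+k_m}$, comparing coefficients of ${\bf x}^{\bf k}$, gives this directly.

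The only point needing care is justifying that a sum over a box in $\Z^n_{\geq0}$ of a product of one-variable functions equals the product of one-variable sums. This is immediate from distributivity, since the box is a Cartesian product of intervals and the summand depends on each coordinate separately. The convention $\binom{\bf k}{\bf k'}=0$ for ${\bf k'}\not\leq{\bf k}$ is consistent with this coordinatewise factorization, since a product vanishes as soon as one factor does. So I expect no real obstacle.
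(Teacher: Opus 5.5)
Your proposal is correct and is essentially the paper's proof: the sum over the box ${\bf j}\leq{\bf i}\leq{\bf k}$ (resp.\ ${\bf k'}\leq{\bf k}$) factors into a product of one-variable sums, and each factor is evaluated by the classical one-variable identity. The only difference is that you also prove the one-variable identities (via $\binom{k}{i}\binom{i}{j}=\binom{k}{j}\binom{k-j}{i-j}$ and coefficient extraction in $(1+x)^{r}(1+x)^{k}$), whereas the paper simply cites them; in your alternative argument via $((1+x)-1)^k=x^k$, the coefficient of $x^j$ picks up an extra sign $(-1)^{k-j}$, which is harmless because it multiplies $\delta_{jk}$.
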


{\em Proof.} These identities are generalizations of the well known identities (see any book on  combinatorics such as \cite{Riordanbook}):
\[\begin{array}{c}
\sum_{i=j}^k\binom{k}{i}\binom{i}{j}(-1)^{i-j}=\delta_{jk},\\
\\
\sum_{k'=0}^k\binom{r}{k'}\binom{k}{k'}=\binom{r+k}{k}.\\
\end{array}\]
These are the one-dimensional versions of the identities we have to prove. We prove the generalization of the first identity:
\[\begin{array}{ll}
\sum_{{\bf j}\leq{\bf i}\leq{\bf k}}\binom{\bf k}{\bf i}\binom{\bf i}{\bf j}(-1)^{|{\bf i}|-|{\bf j}|} & =\sum_{{\bf j}\leq{\bf i}\leq{\bf k}}\binom{k_1}{i_1}\cdots\binom{k_n}{i_n}\binom{i_1}{j_1}\cdots\binom{i_n}{j_n}(-1)^{i_1+\cdots+i_n-j_1-\cdots-j_n}\\
\\
 & =\Big(\sum_{i_1=j_1}^{k_1}\binom{k_1}{i_1}\binom{i_1}{j_1}(-1)^{i_1-j_1}\Big)\cdots\Big(\sum_{i_n=j_n}^{k_n}\binom{k_n}{i_n}\binom{i_n}{j_n}(-1)^{i_n-j_n}\Big)\\
 \\
  & =\delta_{j_1k_1}\cdots\delta_{j_nk_n}\\
\end{array}\]
The generalization of the Vandermonde identity goes analogously:
\[\begin{array}{ll}
\sum_{{\bf k'}\in\Z^n_{\geq 0},\ {\bf k'}\leq{\bf k}}\binom{\bf r}{\bf k'}\binom{\bf k}{\bf k'} & =\sum_{0\leq k_i'\leq k_i}\binom{r_1}{k_1'}\cdots\binom{r_n}{k_n'}\binom{k_1}{k_1'}\cdots\binom{k_n}{k_n'}\\
\\
 & =\Big(\sum_{k_1'=0}^{k_1}\binom{r_1}{k_1'}\binom{k_1}{k_1'}\Big)\cdots\Big(\sum_{k_n'=0}^{k_n}\binom{r_n}{k_n'}\binom{k_n}{k_n'}\Big)\\
 \\
  & =\binom{r_1+k_1}{k_1}\cdots\binom{r_n+k_n}{k_n}\\
\end{array}\]
 \hfill$\Box$

\vspace{3mm}

\begin{notation}
We denote by
 \[{\rm diag}\big(a_1,\ldots,a_n\big)\]
the $n\times n$ matrix with entries $a_1,\ldots,a_n$ in the diagonal and zero elsewhere.
In particular, $I_n$ is the identity matrix ${\rm diag}(1,\ldots,1)$ of order $n$.
\end{notation}

\vspace{3mm}

Let $<_T$ be a total order on $\Z^n_{\geq 0}$ and $R\subseteq\Z^n_{\geq 0}$ a finite set of points.
 If $<_T$ orders the points in $R$ as
\[R=\{{\bf k_1},\ldots,{\bf k_r}\}\]
we can construct the multidimensional binomial triangle associated with the set $R$ as:
\[\begin{array}{ccccc}
& & \binom{\bf k_1}{\bf k_1} & & \\
\\
 & \binom{\bf k_2}{\bf k_1} & & \binom{\bf k_2}{\bf k_2} & \\

\\
 &  & \vdots & & \\
\\
\binom{\bf k_r}{\bf k_1} & \binom{\bf k_r}{\bf k_2} & \cdots & \binom{\bf k_r}{\bf k_{r-1}} & \binom{\bf k_r}{\bf k_r}\\
\end{array}\]

\vspace{3mm}

The idea is to represent this triangle as a squared matrix. We could index the matrices by $i$ and $j$ with $1\leq i,j\leq r$, but it is convenient to index them by the vectors ${\bf k_i},{\bf k_j}\in\Z^n_{\geq 0}$. As in the classical case, there are three ways to represent Pascal matrices.

\vspace{3mm}

\begin{definition}
Let $R=\{{\bf k_1},\ldots,{\bf k_r}\}$ be a finite set of points in $\Z^n_{\geq 0}$. We define the $r\times r$ matrices
\[L_R=\Big(\ell_{{\bf k_i},{\bf k_j}}\Big)\ \mbox{ and }U_R=L_R^t\]
where $\ell_{{\bf k_i},{\bf k_j}}=\binom{\bf k_i}{\bf k_j}$ and the rows and columns of the matrices are ordered by a total order $<_T$ on $\Z^n_{\geq 0}$. Moreover we define the symmetric matrix
\[S_R=\Big(s_{{\bf k_i},{\bf k_j}}\Big)\]
with $s_{{\bf k_i},{\bf k_j}}=\binom{{\bf k_i}+{\bf k_j}}{\bf k_i}$.
\[S_R=\left(\begin{array}{cccc}
\binom{2{\bf k_1}}{\bf k_1} & \binom{{\bf k_1}+{\bf k_2}}{\bf k_1} & \cdots & \binom{{\bf k_1}+{\bf k_r}}{\bf k_1}\\
\\
\binom{{\bf k_1}+{\bf k_2}}{\bf k_2} & \binom{2{\bf k_2}}{\bf k_2} & \cdots & \binom{{\bf k_2}+{\bf k_r}}{\bf k_2}\\
\\
\vdots & & & \vdots\\
\\
\binom{{\bf k_1}+{\bf k_r}}{\bf k_r} & & \cdots & \binom{2{\bf k_r}}{\bf k_r}\\
\end{array}\right)\]
\end{definition}

\vspace{3mm}

\begin{example}
Consider the ordered set $R=\{(1,0),(0,0),(0,1)\}$. The corresponding Pascal matrix $L_R$ is
\[L_R=\left(\begin{array}{ccc}
1 & 1 & 0\\
0 & 1 & 0\\
0 & 1 & 1\\
\end{array}\right)\]
Obviously we can order the set $R$ in a more convenient way, so that the Pascal matrix $L_R$ is triangular. In this simple example, the choices are $R=\{(0,0),(1,0),(0,1)\}$ or $R=\{(0,0),(0,1),(1,0)\}$.
\end{example}

\begin{proposition}
Let $<_T$ be any total order on $\Z^n$ and let $R\subseteq\Z^n_{\geq 0}$ be a set of points ordered by $<_T$. The matrix $L_R$ is lower-triangular (and hence $U_R$ upper-triangular) if and only if the total order $<_T$ is compatible with the partial order $\leq$, i.e., if ${\bf k}\leq{\bf k'}$ with ${\bf k}\neq{\bf k'}$, then ${\bf k}<_T{\bf k'}$.
\label{lemOrder}
\end{proposition}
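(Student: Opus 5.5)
The plan rests on a single observation about the entries: for ${\bf k},{\bf k'}\in\Z^n_{\geq 0}$ we have $\binom{\bf k}{\bf k'}\neq 0$ if and only if ${\bf k'}\leq{\bf k}$. This holds because $\binom{\bf k}{\bf k'}=\prod_j\binom{k_j}{k'_j}$, and a one-dimensional binomial $\binom{k_j}{k'_j}$ with nonnegative integer entries is a positive integer when $k'_j\leq k_j$ and zero otherwise, by the convention adopted after (\ref{eqVand}). In addition, the diagonal entries are $\binom{\bf k_i}{\bf k_i}=1$.

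With this, lower-triangularity of $L_R$ means exactly the following. Whenever $i<j$, that is ${\bf k_i}<_T{\bf k_j}$, the entry $\ell_{{\bf k_i},{\bf k_j}}=\binom{\bf k_i}{\bf k_j}$ must vanish, i.e. ${\bf k_j}\not\leq{\bf k_i}$.

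For the ``if'' direction, assume $<_T$ is compatible with $\leq$ and take $i<j$. If we had ${\bf k_j}\leq{\bf k_i}$, then since ${\bf k_j}\neq{\bf k_i}$, compatibility would give ${\bf k_j}<_T{\bf k_i}$. This contradicts ${\bf k_i}<_T{\bf k_j}$ because $<_T$ is a total, hence antisymmetric, order. So the entry vanishes, and $L_R$ is lower-triangular. Then $U_R=L_R^t$ is upper-triangular, and its diagonal is also all ones.

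For the ``only if'' direction, suppose ${\bf k},{\bf k'}\in R$ with ${\bf k}\leq{\bf k'}$ and ${\bf k}\neq{\bf k'}$, and suppose toward a contradiction that ${\bf k'}<_T{\bf k}$. Then ${\bf k'}$ indexes an earlier row than the column indexed by ${\bf k}$. The entry $\binom{\bf k'}{\bf k}$ lies strictly above the diagonal and is nonzero by the key observation, so $L_R$ is not lower-triangular. Hence ${\bf k}<_T{\bf k'}$.

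The main obstacle is the precise scope of ``compatible''. The argument only constrains $<_T$ on pairs of points of $R$; for a small $R$, say a single point, every total order makes $L_R$ triangular. I would therefore state the equivalence in one of two forms. The first reads compatibility as restricted to $R$. The second has the converse quantify over all finite $R$: if $L_R$ is lower-triangular for every $R$, apply the argument to $R=\{{\bf k},{\bf k'}\}$ to recover full compatibility on $\Z^n_{\geq 0}$. Everything else is the routine zero pattern of the entries.
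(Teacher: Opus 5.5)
Your proof is correct and follows the paper's argument. Both reduce lower-triangularity to the vanishing of $\binom{\bf k}{\bf k'}$ whenever ${\bf k}<_T{\bf k'}$, and both use that $\binom{\bf k}{\bf k'}\neq 0$ exactly when ${\bf k'}\leq{\bf k}$. Your remark on scope is apt: the paper's proof also only concludes compatibility for pairs ${\bf k},{\bf k'}\in R$, which is your first reading of the statement.
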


{\em Proof.}
Notice that the matrix $L_R$ is lower-triangular if and only if $\binom{\bf k}{\bf k'}=0$ whenever ${\bf k}<_T{\bf k'}$. On the other hand, $\binom{\bf k}{\bf k'}=0$ if and only if there exists $1\leq i\leq n$ such that $k_i<k_i'$.

Hence the matrix $L_R$ is lower-triangular if and only if for all ${\bf k},{\bf k'}\in R$ with ${\bf k}<_T{\bf k'}$ there exists $1\leq i\leq n$ such that $k_i<k_i'$. Or equivalently, for all ${\bf k},{\bf k'}\in R$ with ${\bf k}\geq {\bf k'}$ then ${\bf k}\geq_T{\bf k'}$.
\hfill$\Box$

\vspace{2mm}

\begin{corollary}
Let $R\subseteq\Z_{\geq 0}^n$ be a finite set of points. Then
\[\mbox{det}\big(L_R\big)=\mbox{det}\big(U_R\big)=1\]

\end{corollary}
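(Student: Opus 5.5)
The plan is to reduce to the triangular case of Proposition \ref{lemOrder} by reordering $R$, and then read the determinant off the diagonal. First I would observe that the determinant of $L_R$ does not depend on the total order $<_T$ chosen to index rows and columns. Passing from one ordering of $R$ to another replaces $L_R$ by $P L_R P^{T}$, where $P$ is the permutation matrix of the reordering. This holds because rows and columns are indexed by the same set $R$ and are permuted in the same way. Hence
\[\det\big(PL_RP^T\big)=\det(P)^2\det(L_R)=\det(L_R).\]
So we may choose whatever ordering of $R$ is most convenient.

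Next I would produce a total order compatible with the partial order $\leq$. A concrete choice is the graded lexicographic order: ${\bf k}<_T{\bf k'}$ if $|{\bf k}|<|{\bf k'}|$, or if $|{\bf k}|=|{\bf k'}|$ and ${\bf k}$ precedes ${\bf k'}$ lexicographically. If ${\bf k}\leq{\bf k'}$ with ${\bf k}\neq{\bf k'}$, then $|{\bf k}|<|{\bf k'}|$, so ${\bf k}<_T{\bf k'}$ and compatibility holds. (Alternatively, since $R$ is finite, any linear extension of the partial order restricted to $R$ would do.) By Proposition \ref{lemOrder}, with this ordering $L_R$ is lower-triangular.

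The determinant of a triangular matrix is the product of its diagonal entries. These are
\[\binom{\bf k}{\bf k}=\prod_{i=1}^n\binom{k_i}{k_i}=1\]
for each ${\bf k}\in R$, so $\det(L_R)=1$. Finally, $\det(U_R)=\det(L_R^T)=\det(L_R)=1$.

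The only step needing some care is the first: the reordering must be a simultaneous permutation of rows and columns, so that no sign appears. Existence of a compatible order is routine via the grading by $|{\bf k}|$.
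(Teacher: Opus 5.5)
Your proof is correct and follows the paper's argument: order $R$ by a total order compatible with $\leq$ so that $L_R$ becomes lower-triangular with ones on the diagonal, then note that reordering permutes rows and columns simultaneously, so no sign change occurs. Your formulation via $PL_RP^{T}$ with $\det(P)^2=1$, together with the explicit check that a compatible order exists, is a slightly cleaner rendering of the same idea.
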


{\em Proof.} It is enough to prove it for $L_R$. Let $R'$ be the set of points in $R$ ordered by a total order $\leq_T$ compatible with $\leq$. Then, by Proposition \ref{lemOrder} we have that det$\big(L_{R'}\big)=1$, because the matrix $L_{R'}$ is lower triangular and the elements in the diagonal are all ones.

By definition of $R'$ we can transform the matrix $L_{R'}$ into $L_R$ by interchanging rows and columns, hence $|\mbox{det}\big(L_R\big)|=1$. The result follows by noticing that for any interchange between two columns (that possibly change the sign of the determinant) we have to perform the same interchange between the corresponding columns. Hence det$\big(L_R\big)=1$.
\hfill$\Box$

\vspace{5mm}

From now on, unless otherwise stated,  we will consider $<_T$, the graded reverse lexicographic order, i.e., ${\bf k}<_T{\bf k'}$ if and only if $
|{\bf k}|<|{\bf k'}|$ or $|{\bf k}|=|{\bf k'}|$ and the left-most nonzero entry of ${\bf k'}-{\bf k}$ is negative. By ${\bf k}\leq_T{\bf k'}$ we mean ${\bf k}<_T{\bf k'}$ or ${\bf k}={\bf k'}$.

\vspace{2mm}

By Proposition \ref{lemOrder}, ordering the set $R$ by $<_T$, the  matrix $L_R$ turns out to be a lower-triangular matrix
\[L_R=\left(\begin{array}{cccc}
\binom{\bf k_1}{\bf k_1} & 0 & \cdots & 0\\
\\
\binom{\bf k_2}{\bf k_1} & \binom{\bf k_2}{\bf k_2} & 0 & \cdots\\
\\
\vdots & & \ddots & \\
\\
\binom{\bf k_r}{\bf k_1} & \binom{\bf k_r}{\bf k_2} & \cdots & \binom{\bf k_r}{\bf k_r}\\
\end{array}\right)\]
and hence $U_R$ is an upper-triangular matrix.
\vspace{3mm}

\begin{remark}
For $1\leq i\leq n$ and $r\in\Z_{\geq 0}$, let us denote by $R_{i,r}$  the set of points
\[R_{i,r}=\{\lambda{\bf e_i}\ |\ 0\leq\lambda\leq r\}.\]
Then the Pascal matrices associated to $R_{i,r}$ coincide with the classical Pascal matrices of order $r+1$.
\label{RemPcl}
\end{remark}

\vspace{5mm}

Exactly as in the classical case, we can consider the infinite versions of the Pascal matrices. From now on, the set $R\subseteq\Z^n_{\geq 0}$ is not necessarily finite, unless otherwise stated.

\section{The linear algebra of Pascal matrices}

In the previous section we have generalized the classical Pascal matrices of certain order to Pascal matrices associated with a set of points $R\subseteq\Z_{\geq 0}^n$. A lot of work has been done studying algebraic properties of the classical Pascal matrices (see for instance \cite{BrPi})  and some of its generalizations (see \cite{Z}). In this section we look for analogous properties in the multivariate case.  The first thing to notice is that we need to ask for conditions on the set $R$.
Roughly speaking, the points in $R$ must be {\em together}.
\begin{definition}
Given a monomial ideal $J\subseteq\C[x_1,\ldots,x_n]$, the set of standard monomials, denoted by ${\rm std}_J$, is the set of monomials which do not belong to $J$. We say that a set of points $R$ in $\Z^n$ satisfies the monomial condition if it can be identified with ${\rm std}_J$ for some monomial ideal $J$ in $\C[x_1,\ldots,x_n]$, by means of the identification
\[x_1^{a_1}\cdots x_n^{a_n}\longleftrightarrow (a_1,\ldots,a_n)\]
\label{monomial}
\end{definition}
Note that the set $R$ is finite if and only if the ideal $J$ is zero-dimensional.

\vspace{3mm}

\begin{remark}
Satisfying the monomial condition for a set $R$ is equivalent to any of the following:
\begin{enumerate}
\item[(i)] For any ${\bf k}\in R$,
\[\{{\bf i}\in\Z^n_{\geq 0}\ |\ {\bf i}\leq{\bf k}\}\subseteq R.\]

\item[(ii)] For any ${\bf k},{\bf k'}\in R$ with ${\bf k}\leq{\bf k'}$,
\[\{{\bf i}\in R\ |\ {\bf k'}\leq{\bf i}\leq{\bf k}\}=\{{\bf i}\in\Z^n\ |\ {\bf k'}\leq{\bf i}\leq{\bf k}\}.\]
\end{enumerate}
\label{RemeqMC}
\end{remark}

\vspace{3mm}

If a set $R$ satisfies such condition, we will see that the matrices $L_R$, $U_R$ and $S_R$ inherit many good properties that classical Pascal matrices have.

\vspace{3mm}

Exactly as it happens in the case of classical Pascal matrices, the lower-triangular and upper-triangular matrices give the LU-factorization of the symmetric one. It coincides with the Cholesky factorization since $U_R=L_R^T$.

\begin{lemma}
Let $R$ be a set of points which satisfies the monomial condition. Then
\[S_R=L_RU_R.\]
\label{LU}
\end{lemma}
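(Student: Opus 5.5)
The plan is to compare the two matrices entry by entry, using the multivariate Vandermonde identity \eqref{eqVand}. Fix ${\bf k},{\bf r}\in R$. By definition of the product, the $({\bf k},{\bf r})$ entry of $L_RU_R$ is
\[\big(L_RU_R\big)_{{\bf k},{\bf r}}=\sum_{{\bf k'}\in R}\ell_{{\bf k},{\bf k'}}\,\ell_{{\bf r},{\bf k'}}=\sum_{{\bf k'}\in R}\binom{\bf k}{\bf k'}\binom{\bf r}{\bf k'}.\]
When $R$ is infinite this sum is still finite, since $\binom{\bf k}{\bf k'}=0$ unless ${\bf k'}\leq{\bf k}$. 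So the product is well defined in the infinite case as well.

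The key step is to show that the restriction ${\bf k'}\in R$ costs nothing. By the convention $\binom{\bf k}{\bf k'}=0$ for ${\bf k'}\not\leq{\bf k}$, only indices ${\bf k'}$ with ${\bf k'}\leq{\bf k}$ contribute. Since ${\bf k}\in R$ and $R$ satisfies the monomial condition, Remark \ref{RemeqMC}(i) gives $\{{\bf k'}\in\Z^n_{\geq 0}\mid {\bf k'}\leq{\bf k}\}\subseteq R$. Hence the sum over ${\bf k'}\in R$ equals the sum over all ${\bf k'}\in\Z^n_{\geq 0}$ with ${\bf k'}\leq{\bf k}$, which is exactly the left-hand side of \eqref{eqVand}.

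Applying \eqref{eqVand} then gives $\binom{{\bf r}+{\bf k}}{\bf k}$. It remains to match this with the definition of $s_{{\bf k},{\bf r}}$, which is $\binom{{\bf k}+{\bf r}}{\bf k}$. These agree, and the symmetry $\binom{{\bf k}+{\bf r}}{\bf k}=\binom{{\bf k}+{\bf r}}{\bf r}$, which holds coordinatewise, confirms that $S_R$ is symmetric regardless of the indexing convention. This proves $S_R=L_RU_R$.

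The only real obstacle is the reduction of the index set in the second paragraph: without the monomial condition, terms with ${\bf k'}\leq{\bf k}$ but ${\bf k'}\notin R$ would be missing and the identity fails. That step is exactly Remark \ref{RemeqMC}(i), so I expect it to go through directly.
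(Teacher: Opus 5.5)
Your proof is correct and follows the paper's approach. The paper simply says the lemma is a direct consequence of the multivariate Vandermonde identity (\ref{eqVand}), and you make explicit the step it leaves implicit: the monomial condition (Remark \ref{RemeqMC}(i)) lets the sum over ${\bf k'}\in R$ be replaced by the full sum over ${\bf k'}\leq{\bf k}$.
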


{\em Proof.} It is a direct consequence of (\ref{eqVand}). \hfill$\Box$

\begin{corollary}
For any finite set of points $R$ satisfying the monomial condition,
\[{\rm det}(S_R)=1.\]
\label{det1}
\end{corollary}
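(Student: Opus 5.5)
The plan is to combine Lemma \ref{LU} with the multiplicativity of the determinant and the determinant computation already carried out for $L_R$ and $U_R$. Since $R$ is finite and satisfies the monomial condition, Lemma \ref{LU} gives the factorization $S_R=L_RU_R$ as an identity of $r\times r$ matrices. We would then use
\[{\rm det}(S_R)={\rm det}(L_R)\,{\rm det}(U_R),\]
and apply the Corollary stating ${\rm det}(L_R)={\rm det}(U_R)=1$ for any finite $R$, which yields ${\rm det}(S_R)=1$.

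The one point to check is that the three matrices $L_R$, $U_R$ and $S_R$ are indexed by the same ordering of $R$. This holds because they are all built from the same total order $<_T$. The factorization is also insensitive to that order: reordering $R$ conjugates $S_R$ by a permutation matrix $P$, replaces $L_R$ by $PL_RP^T$ and $U_R$ by $PU_RP^T$, and therefore leaves all three determinants unchanged. So we may take $<_T$ compatible with $\leq$, as in Proposition \ref{lemOrder}. In that case $L_R$ is lower-triangular and $U_R$ upper-triangular, both with diagonal entries $\binom{\bf k}{\bf k}=1$, and the determinant computation becomes immediate.

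The only substantive ingredient is the factorization itself, which is where the monomial condition is used. The $({\bf k_i},{\bf k_j})$ entry of $L_RU_R$ is $\sum_{{\bf k'}\in R}\binom{\bf k_i}{\bf k'}\binom{\bf k_j}{\bf k'}$. For it to equal $\binom{{\bf k_i}+{\bf k_j}}{\bf k_i}$ by (\ref{eqVand}), the sum must run over all ${\bf k'}\leq{\bf k_j}$. Remark \ref{RemeqMC}(i) guarantees that every such ${\bf k'}$ lies in $R$, and terms with ${\bf k'}\not\leq{\bf k_j}$ vanish. Since this step is already contained in Lemma \ref{LU}, no real obstacle is expected.
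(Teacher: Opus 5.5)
Your proposal is correct and is essentially the argument the paper intends: the corollary is stated without a separate proof right after Lemma~\ref{LU}, and it follows from $S_R=L_RU_R$, from ${\rm det}(L_R)={\rm det}(U_R)=1$, and from multiplicativity of the determinant. Your observation that reordering $R$ conjugates all three matrices by the same permutation matrix handles the ordering issue more cleanly than the row/column-interchange argument in the paper's proof that ${\rm det}(L_R)=1$; with the default order $<_T$, which is compatible with $\leq$, that step is not even needed.
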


\vspace{2mm}

\begin{example}
The ordered set $R=\{(0,0),(0,1),(1,0),(0,2)\}$ satisfies the monomial condition. Its associated Pascal matrices are
\[L_R=\left(\begin{array}{cccc}
1 & 0 & 0 & 0\\
1 & 1 & 0 & 0\\
1 & 0 & 1 & 0\\
1 & 2 & 0 & 1\\
\end{array}\right),\ U_R=L_R^T\ \mbox{ and }S_R=\left(\begin{array}{cccc}
1 & 1 & 1 & 1\\
1 & 2 & 1 & 3\\
1 & 1 & 2 & 1\\
1 & 3 & 1 & 6\\
\end{array}\right)\]
We can check that $S_R=L_RU_R$ and that det$(S_R)=1$.

Now notice that the set $R'=R\setminus\{(0,1)\}$ does not satisfy the monomial condition and we loose the properties of the Lemma \ref{LU} and Corollary \ref{det1}, since
\[L_R=\left(\begin{array}{ccc}
1 & 0 & 0\\
1 & 1 & 0\\
1 & 0 & 1\\
\end{array}\right)\ \mbox{ and }S_R=\left(\begin{array}{ccc}
1 & 1 & 1\\
1 & 2 & 1\\
1 & 1 & 6\\
\end{array}\right)\]
\label{EjR}
\end{example}

\vspace{3mm}

\begin{lemma}
Let $R$ be a set of points satisfying the monomial condition. Then the inverse of the lower triangular Pascal matrix is
\[L_R^{-1}=\left(\binom{\bf k}{\bf k'}(-1)^{|{\bf k}|-|{\bf k'}|}\right)_{{\bf k},{\bf k'}\in R}\]
Moreover
\[L_R^{-1}=D_RL_RD_R,\]
where $D_R= {\rm diag}\big((-1)^{|{\bf k}|}\big)_{{\bf k}\in R}$.
\label{inv}
\end{lemma}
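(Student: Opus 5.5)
Let $M=\left(\binom{\bf k}{\bf k'}(-1)^{|{\bf k}|-|{\bf k'}|}\right)_{{\bf k},{\bf k'}\in R}$. The plan is to verify directly that $L_RM$ is the identity, using identity (\ref{eqbinom}), and then to read off the factorization $D_RL_RD_R$ entrywise. With rows and columns ordered by $<_T$, both $L_R$ and $M$ are lower-triangular with ones on the diagonal, by Proposition \ref{lemOrder}, because their entries vanish unless ${\bf k'}\leq{\bf k}$. Hence a one-sided inverse suffices. In the infinite case this also makes every entry of the product a finite sum: for fixed ${\bf k}$ only the finitely many ${\bf i}\leq{\bf k}$ contribute.

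Fix ${\bf k},{\bf j}\in R$. The $({\bf k},{\bf j})$ entry of $L_RM$ is
\[\sum_{{\bf i}\in R}\binom{\bf k}{\bf i}\binom{\bf i}{\bf j}(-1)^{|{\bf i}|-|{\bf j}|}.\]
A term is nonzero only when ${\bf j}\leq{\bf i}\leq{\bf k}$, so the entry vanishes if ${\bf j}\not\leq{\bf k}$, which is consistent with $\delta_{{\bf j}{\bf k}}=0$. If ${\bf j}\leq{\bf k}$, the sum runs over $\{{\bf i}\in R\ |\ {\bf j}\leq{\bf i}\leq{\bf k}\}$. This is the one place where the monomial condition is used. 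By Remark \ref{RemeqMC}(ii), this set equals the full box $\{{\bf i}\in\Z^n\ |\ {\bf j}\leq{\bf i}\leq{\bf k}\}$, so the sum is exactly the left-hand side of (\ref{eqbinom}) and equals $\delta_{{\bf j}{\bf k}}$. Therefore $L_RM=I$, and by triangularity $M=L_R^{-1}$. The analogous computation for $ML_R$ gives the same sum, so both products are the identity without needing the triangularity argument.

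For the second claim, compute the $({\bf k},{\bf k'})$ entry of $D_RL_RD_R$. It is $(-1)^{|{\bf k}|}\binom{\bf k}{\bf k'}(-1)^{|{\bf k'}|}$. Since $(-1)^{|{\bf k}|+|{\bf k'}|}=(-1)^{|{\bf k}|-|{\bf k'}|}$, this equals the $({\bf k},{\bf k'})$ entry of $M$.

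The main obstacle is the step where the summation over $R$ must coincide with the full box appearing in (\ref{eqbinom}). Example \ref{EjR} shows that without the monomial condition some intermediate ${\bf i}$ is missing and the alternating sum fails to cancel. Remark \ref{RemeqMC} handles this directly, so the remaining steps are routine.
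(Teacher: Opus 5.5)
Your proof is correct and follows the paper's own argument: the entries of $L_R M$ are the alternating sums of (\ref{eqbinom}) once the monomial condition (Remark \ref{RemeqMC}) identifies the summation range in $R$ with the full box, and $D_RL_RD_R$ is read off entrywise from $(-1)^{|{\bf k}|+|{\bf k'}|}=(-1)^{|{\bf k}|-|{\bf k'}|}$. One small imprecision: the sum for $ML_R$ is not literally the sum in (\ref{eqbinom}) but that sum multiplied by $(-1)^{|{\bf k}|-|{\bf j}|}$, which is harmless because this factor equals $1$ when ${\bf j}={\bf k}$.
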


{\em Proof.}
The first equality is a straightforward consequence of the identity (\ref{eqbinom}). Note how we use the fact that $R$ satisfies the monomial condition, since in this case (see Remark \ref{RemeqMC})
\[\{{\bf i}\in R\ |\ {\bf k'}\leq{\bf i}\leq{\bf k}\}=\{{\bf i}\in\Z^n\ |\ {\bf k'}\leq{\bf i}\leq{\bf k}\}.\]

The second identity is also straightforward taking into account the easy remark that $(-1)^\alpha=(-1)^{-\alpha}$ for any $\alpha$.
\hfill$\Box$

\vspace{3mm}

As a consequence we obtain the inverses of the matrices $U_R$ and $S_R$.

\begin{corollary}
Let $R$ be a set of points satisfying the monomial condition. Then
\[U_R^{-1}=D_R U_R D_R,\]
\[S_R^{-1}=D_R S_R D_R.\]
\label{corinv}
\end{corollary}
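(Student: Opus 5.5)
The plan is to derive both identities from Lemma \ref{inv} and Lemma \ref{LU} using only transposition and the fact that $D_R^2$ is the identity.

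\textbf{Inverse of $U_R$.} Since $U_R=L_R^T$, we have $U_R^{-1}=(L_R^{-1})^T$. Lemma \ref{inv} gives $L_R^{-1}=D_RL_RD_R$. Because $D_R$ is diagonal, $D_R^T=D_R$. Transposing therefore gives
\[U_R^{-1}=(D_RL_RD_R)^T=D_R^T\,L_R^T\,D_R^T=D_RU_RD_R.\]
One could also check this entrywise: by the first formula of Lemma \ref{inv}, the transpose of $L_R^{-1}$ has $({\bf k},{\bf k'})$-entry $\binom{\bf k'}{\bf k}(-1)^{|{\bf k'}|-|{\bf k}|}$. No hypothesis beyond the monomial condition, already used in Lemma \ref{inv}, is needed. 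In the infinite case, all products are finite sums because $L_R$ is lower-triangular with respect to $<_T$ and every ${\bf k}$ has finitely many predecessors below it in $\leq$.

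\textbf{Inverse of $S_R$.} Lemma \ref{LU} gives $S_R=L_RU_R$. Then
\[S_R^{-1}=U_R^{-1}L_R^{-1}=D_RU_RD_R\,D_RL_RD_R=D_R\,(U_RL_R)\,D_R,\]
using $D_R^2=I$. To conclude $S_R^{-1}=D_RS_RD_R$, it remains to identify the middle factor $U_RL_R$ with $S_R=L_RU_R$. Equivalently, one needs $S_R\,D_RS_RD_R=I$, that is,
\[\sum_{{\bf i}\in R}\binom{{\bf k}+{\bf i}}{\bf k}\binom{{\bf i}+{\bf k'}}{\bf i}(-1)^{|{\bf i}|+|{\bf k'}|}=\delta_{{\bf k}{\bf k'}}.\]
I would first try to reduce this to the one-dimensional case coordinatewise, as in the proof of (\ref{eqbinom}).

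\textbf{Main obstacle.} This last identification is the step I expect to be the real difficulty, and I believe it fails. Already for $R=\{0,1\}$ in dimension one we have $U_RL_R=\begin{pmatrix}2&1\\1&1\end{pmatrix}$, while $L_RU_R=S_R=\begin{pmatrix}1&1\\1&2\end{pmatrix}$. In this case $D_RS_RD_R=\begin{pmatrix}1&-1\\-1&2\end{pmatrix}$, whereas $S_R^{-1}=\begin{pmatrix}2&-1\\-1&1\end{pmatrix}$.

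So the argument above establishes $U_R^{-1}=D_RU_RD_R$ as stated. For $S_R$, what it establishes is the factored form $S_R^{-1}=D_RU_RL_RD_R$. The stated formula $D_RS_RD_R$ would hold only after reversing the product $U_RL_R$, which does not commute in general, so I do not expect that part of the statement to be provable as written.
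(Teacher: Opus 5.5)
Your proof of $U_R^{-1}=D_RU_RD_R$ is correct. It follows the same route as the paper: transpose $L_R^{-1}=D_RL_RD_R$ from Lemma \ref{inv}, using $D_R^T=D_R$.

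Your objection to the $S_R$ half is also correct: $S_R^{-1}=D_RS_RD_R$ is false in general. The paper's one-line proof only cites Lemma \ref{LU}, Lemma \ref{inv}, $D_R^T=D_R$ and $D_R^2=I$. Applied correctly, those ingredients give exactly what you derived, $S_R^{-1}=U_R^{-1}L_R^{-1}=D_RU_RL_RD_R$. Reaching $D_RS_RD_R$ requires either inverting $L_RU_R$ in the wrong order or commuting $U_R$ and $L_R$, and neither is valid.

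Your counterexample checks out. $R=\{0,1\}$ is ${\rm std}_{(x^2)}$, so it satisfies the monomial condition. For it, $S_R=\begin{pmatrix}1&1\\1&2\end{pmatrix}$, $S_R^{-1}=\begin{pmatrix}2&-1\\-1&1\end{pmatrix}$, and $D_RS_RD_R=\begin{pmatrix}1&-1\\-1&2\end{pmatrix}$. Indeed $S_R\,D_RS_RD_R=\begin{pmatrix}0&1\\-1&3\end{pmatrix}\neq I$.

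For finite $R$, a corrected statement survives. Since $U_RL_R=L_R^{-1}S_RL_R$, we get $S_R^{-1}=(L_RD_R)^{-1}S_R(L_RD_R)$. So $S_R$ is still similar to its inverse, as asserted in Remark \ref{remSim}, but the conjugating matrix is $L_RD_R$ rather than $D_R$.

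Add one caveat to your write-up. For infinite $R$ (e.g.\ $R=\Z^n_{\geq 0}$, which Remark \ref{inf} brings into scope), the entries of $U_RL_R$ are $\sum_{{\bf i}\in R}\binom{\bf i}{\bf k}\binom{\bf i}{\bf k'}$, which diverge. So your factored formula $S_R^{-1}=D_RU_RL_RD_R$ is only meaningful for finite $R$. Your finiteness remark about products justifies the $U_R$ part, not this one.
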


{\em Proof.} It follows by Lemma \ref{LU} and Lemma \ref{inv},
taking into account that ${\rm diag}\big((-1)^{|{\bf k}|}\big)_{{\bf k}\in R}^T={\rm diag}\big((-1)^{|{\bf k}|}\big)_{{\bf k}\in R}$  and that ${\rm diag}\big((-1)^{|{\bf k}|}\big){\rm diag}\big((-1)^{|{\bf k}|}\big)=I_r$, where $r$ is the cardinality of the set $R$. \hfill$\Box$

\vspace{3mm}

\begin{remark}
Notice that, since $D_R=D_R^{-1}$, Lemma \ref{inv} and Corollary \ref{corinv} imply that $L_R$, $U_R$ and $S_R$ are similar matrices to their respective inverses.
\label{remSim}
\end{remark}

\vspace{3mm}

\begin{lemma}
Let $R=\{{\bf k_1},\ldots,{\bf k_r}\}$ be a set of points in $\Z^n$ satisfying the monomial condition, and let $p$ be any integer. Then
\[L_R^p\left(\begin{array}{c}
{\bf x}^{\bf k_1}\\
\\
\vdots\\
\\
{\bf x}^{\bf k_r}\\
\end{array}\right)=\left(\begin{array}{c}
(p{\bf 1}+{\bf x})^{\bf k_1}\\
\\
\vdots\\
\\
(p{\bf 1}+{\bf x})^{\bf k_r}\\
\end{array}\right)\]
where we denote $(p{\bf 1}+{\bf x})^{\bf k}=\prod_{j=1}^n(p+x_j)^{k_j}$.
\label{lem1}
\end{lemma}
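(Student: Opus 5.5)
We first prove the case $p=1$ directly, then obtain all $p\geq 0$ by induction, and finally handle negative $p$ through the inverse formula of Lemma \ref{inv}.

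\textbf{The case $p=1$.} The ${\bf k}$-th entry of $L_R\,({\bf x}^{\bf k_1},\ldots,{\bf x}^{\bf k_r})^T$ is
\[\sum_{{\bf k'}\in R}\binom{\bf k}{\bf k'}{\bf x}^{\bf k'}.\]
Since $\binom{\bf k}{\bf k'}=0$ unless ${\bf k'}\leq{\bf k}$, only ${\bf k'}\leq{\bf k}$ contribute. By the monomial condition (Remark \ref{RemeqMC}(i)), every ${\bf k'}\in\Z^n_{\geq 0}$ with ${\bf k'}\leq{\bf k}$ lies in $R$. So the sum runs over the full box $\{{\bf k'}\leq{\bf k}\}$, and Lemma \ref{anBin}(i), with $x_j$ as the variables, gives $\prod_j(1+x_j)^{k_j}=({\bf 1}+{\bf x})^{\bf k}$. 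This is the only point where the monomial condition is used, and it is the step to watch: without it, terms of the expansion would be missing.

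\textbf{Positive $p$.} We argue by induction on $p\geq 0$; the case $p=0$ is trivial. Write $v({\bf y})$ for the column vector $({\bf y}^{\bf k})_{{\bf k}\in R}$. The case $p=1$ is a polynomial identity $L_Rv({\bf y})=v({\bf 1}+{\bf y})$ valid for arbitrary ${\bf y}$, since it is just the binomial expansion. Then
\[L_R^{p+1}v({\bf x})=L_R\,v(p{\bf 1}+{\bf x})=v((p+1){\bf 1}+{\bf x}),\]
where the second equality substitutes ${\bf y}=p{\bf 1}+{\bf x}$. For infinite $R$ the product is still well defined, because $L_R$ is row-finite: each row has finitely many nonzero entries, since ${\bf k'}\leq{\bf k}$ allows only finitely many ${\bf k'}$.

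\textbf{Negative $p$.} Apply $L_R^{-1}$ to $v({\bf x})$. By Lemma \ref{inv}, the ${\bf k}$-th entry is
\[\sum_{{\bf k'}\leq{\bf k}}\binom{\bf k}{\bf k'}(-1)^{|{\bf k}|-|{\bf k'}|}{\bf x}^{\bf k'}=\prod_j(x_j-1)^{k_j}=(-{\bf 1}+{\bf x})^{\bf k},\]
again using the monomial condition to obtain the full box. Alternatively, apply the $p=1$ identity with ${\bf y}=-{\bf 1}+{\bf x}$ and invert $L_R$. The same substitution induction, with $-1$ in place of $1$, then gives all negative $p$.
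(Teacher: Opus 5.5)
Your proof is correct and follows the paper's argument step for step: the case $p=1$ via the monomial condition and Lemma~\ref{anBin}(i), induction on $p$ by substituting $p{\bf 1}+{\bf x}$ into the $p=1$ identity, and negative $p$ via the inverse formula of Lemma~\ref{inv}. You are more explicit than the paper, which covers the case $p<0$ only with ``similar arguments'' and does not mention row-finiteness when $R$ is infinite.
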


{\em Proof.} If $p=0$ the claim is obvious. Let us suppose first that $p=1$. The ${\bf k}$-th row of $L_R$ times the vector $\big({\bf x}^{\bf k_1},\ldots,{\bf x}^{\bf k_r}\big)^T$ is
\[\sum_{{\bf k'}\in R}\binom{\bf k}{\bf k'}{\bf x}^{\bf k'}=\sum_{{\bf k'}\in R,{\bf k'}\leq{\bf k}}\binom{\bf k}{\bf k'}{\bf x}^{\bf k'}.\]
Since $R$ satisfies the monomial condition, this is equal to
\[\sum_{{\bf k'}\in\Z^n_{\geq 0},{\bf k'}\leq{\bf k}}\binom{\bf k}{\bf k'}{\bf x}^{\bf k'}=\prod_{j=1}^n\sum_{k_j'=0}^{k_j}\binom{k_j}{k_j'}x_j^{k_j'}=\prod_{j=1}^n(1+x_j)^{k_j}.\]
Suppose the claim is true for $p>0$ and we prove it for $p+1$. We have
\[L_R^{p+1}\left(\begin{array}{c}
{\bf x}^{\bf k_1}\\
\vdots\\
{\bf x}^{\bf k_r}\\
\end{array}\right)=L_R\left(\begin{array}{c}
(p{\bf 1}+{\bf x})^{\bf k_1}\\
\vdots\\
(p{\bf 1}+{\bf x})^{\bf k_r}\\
\end{array}\right)\]
and the ${\bf k_i}$-th component of this vector is
\[\sum_{{\bf k}\in R}\binom{\bf k_i}{\bf k}(p{\bf 1}+{\bf x})^{\bf k}=\sum_{{\bf k}\leq{\bf k_i}}\binom{\bf k_i}{\bf k}(p{\bf 1}+{\bf x})^{\bf k}=\big((p+1){\bf 1}+{\bf x}\big)^{\bf k_i}.\]

For $p<0$ we have to use Lemma \ref{inv} and similar arguments as before.
\hfill$\Box$

\vspace{3mm}

\begin{example}
Consider the set of points $R=\{(0,0),(0,1),(1,0),(2,0)\}$ ordered by $<_T$. It satisfies the monomial condition. The associated lower-triangular Pascal matrix is
\[L_R=\left(\begin{array}{cccc}
1 & 0 & 0 & 0\\
1 & 1 & 0 & 0\\
1 & 0 & 1 & 0\\
1 & 0 & 2 & 1\\
\end{array}\right)\]
and we have
\[\left(\begin{array}{cccc}
1 & 0 & 0 & 0\\
1 & 1 & 0 & 0\\
1 & 0 & 1 & 0\\
1 & 0 & 2 & 1\\
\end{array}\right)\left(\begin{array}{c}
1\\
y\\
x\\
x^2\\
\end{array}\right)=\left(\begin{array}{c}
1\\
1+y\\
1+x\\
(1+x)^2\\
\end{array}\right)\]
If $R$ does not satisfy the monomial condition the result is no longer true, as can be checked with the set $R=\{(0,0),(1,0),(1,1),(2,0)\}$.
\label{Ex2.10}
\end{example}

\vspace{3mm}

\begin{corollary}
For $p\in\Z$ the ${\bf k_i},{\bf k_j}$-th entry of $L_R^p$ is $p^{|{\bf k_i}|-|{\bf k_j}|}\binom{\bf k_i}{\bf k_j}$, i.e.,
\[L_R^p=\left(p^{|{\bf k_i}|-|{\bf k_j}|}\binom{\bf k_i}{\bf k_j}\right)_{{\bf k_i},{\bf k_j}\in R}\]
or, in other words,
\[L_R^p=D_{R,p}L_RD_{R,p}^{-1}\]
where $D_{R,p}={\rm diag}\big(p^{|{\bf k}|}\big)_{{\bf k}\in R}$.

It follows that
\[L_R^p\equiv I_r\mbox{ mod }p\]
where $r$ is the cardinal of $R$ (possibly infinite).
\label{PowersL}
\end{corollary}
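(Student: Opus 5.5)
The plan is to read the entries of $L_R^p$ directly off Lemma \ref{lem1}. That lemma says that $L_R^p$ sends the vector of monomials $({\bf x}^{\bf k})_{{\bf k}\in R}$ to the vector $\big((p{\bf 1}+{\bf x})^{\bf k}\big)_{{\bf k}\in R}$. Fix a row index ${\bf k_i}$ and expand the ${\bf k_i}$-th component coordinatewise with the binomial theorem, as in Lemma \ref{anBin}(i) after rescaling:
\[(p{\bf 1}+{\bf x})^{\bf k_i}=\prod_{j=1}^n\sum_{k'_j=0}^{k_{i,j}}\binom{k_{i,j}}{k'_j}p^{k_{i,j}-k'_j}x_j^{k'_j}=\sum_{{\bf k'}\leq{\bf k_i}}p^{|{\bf k_i}|-|{\bf k'}|}\binom{\bf k_i}{\bf k'}{\bf x}^{\bf k'}.\]
By the monomial condition (Remark \ref{RemeqMC}(i)), every ${\bf k'}\leq{\bf k_i}$ lies in $R$. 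The right-hand side is therefore a linear combination of the monomials ${\bf x}^{\bf k'}$ with ${\bf k'}\in R$.

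On the other hand, the ${\bf k_i}$-th component of $L_R^p({\bf x}^{\bf k})_{{\bf k}\in R}$ is $\sum_{{\bf k'}\in R}(L_R^p)_{{\bf k_i},{\bf k'}}{\bf x}^{\bf k'}$. The monomials ${\bf x}^{\bf k'}$, ${\bf k'}\in R$, are linearly independent in $\C[x_1,\ldots,x_n]$, so comparing coefficients gives $(L_R^p)_{{\bf k_i},{\bf k'}}=p^{|{\bf k_i}|-|{\bf k'}|}\binom{\bf k_i}{\bf k'}$. This uses the convention that the binomial vanishes when ${\bf k'}\not\leq{\bf k_i}$.

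This coefficient comparison is the one step that needs care. When $R$ is infinite, each row of $L_R^p$ still has finitely many nonzero entries, since the matrix is lower triangular with entries supported on ${\bf k'}\leq{\bf k_i}$. So all products involved are finite sums and the comparison is legitimate. The cases $p\leq 0$ need a remark about the exponent $|{\bf k_i}|-|{\bf k'}|$:
\begin{itemize}
\item for $p<0$ the exponent is nonnegative on the support, so $p^{|{\bf k_i}|-|{\bf k'}|}$ is an integer;
\item for $p=0$ we read $0^0=1$, so that $L_R^0=I$.
\end{itemize}
The expansion above remains valid in both cases.

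For the factorization $L_R^p=D_{R,p}L_RD_{R,p}^{-1}$ with $p\neq 0$, compute the $({\bf k_i},{\bf k_j})$ entry of the right side. It is $p^{|{\bf k_i}|}\binom{\bf k_i}{\bf k_j}p^{-|{\bf k_j}|}$, which matches the formula just obtained. For $p=0$ the matrix $D_{R,p}$ is not invertible, and the identity should be read via the entry formula. Note also that the exponent can be negative only when the binomial vanishes, so the entries are integers.

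Finally, for the congruence, the diagonal entries are $p^0\binom{\bf k}{\bf k}=1$. An off-diagonal nonzero entry has ${\bf k_j}<{\bf k_i}$, hence $|{\bf k_i}|-|{\bf k_j}|\geq 1$, so it is divisible by $p$. Therefore $L_R^p\equiv I_r \bmod p$ entrywise.
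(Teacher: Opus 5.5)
Your proof is correct and follows the paper's intended route: the paper states this corollary without proof immediately after Lemma \ref{lem1}, and it is obtained exactly as you do, by expanding $(p{\bf 1}+{\bf x})^{\bf k_i}$ and comparing coefficients of the linearly independent monomials ${\bf x}^{\bf k'}$, ${\bf k'}\in R$. Your extra remarks fill in details the paper leaves implicit: finiteness of the row sums for infinite $R$, which follows from lower-triangularity with respect to the graded order $<_T$, and the degenerate case $p=0$, where $D_{R,0}$ is not invertible.
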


\vspace{3mm}

\begin{corollary}
For ${\bf k_i},{\bf k_j}\in\Z^n_{\geq 0}$ with ${\bf k_j}\leq{\bf k_i}$ and $p,q\in\Z$,
\[\sum_{{\bf k_j}\leq{\bf k}\leq{\bf k_i}}p^{|{\bf k_i}|-|{\bf k}|}q^{|{\bf k}|-|{\bf k_j}|}\binom{\bf k_i}{\bf k}\binom{\bf k}{\bf k_j}=(p+q)^{|{\bf k_i}|-|{\bf k_j}|}\binom{\bf k_i}{\bf k_j}.\]
\end{corollary}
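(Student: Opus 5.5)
The plan is to read the identity as the $({\bf k_i},{\bf k_j})$ entry of the matrix identity $L_R^pL_R^q=L_R^{p+q}$, with $R$ taken to be a set satisfying the monomial condition that contains ${\bf k_i}$. The natural choice is the box $R=\{{\bf k}\in\Z^n_{\geq 0}\ |\ {\bf k}\leq{\bf k_i}\}$. It is finite and satisfies the monomial condition by Remark \ref{RemeqMC}(i), so it contains ${\bf k_j}$ and every ${\bf k}$ with ${\bf k_j}\leq{\bf k}\leq{\bf k_i}$.

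First, Corollary \ref{PowersL} gives the entries of $L_R^p$ and $L_R^q$ for all integers $p,q$:
\[(L_R^p)_{{\bf k_i},{\bf k}}=p^{|{\bf k_i}|-|{\bf k}|}\binom{\bf k_i}{\bf k},\qquad (L_R^q)_{{\bf k},{\bf k_j}}=q^{|{\bf k}|-|{\bf k_j}|}\binom{\bf k}{\bf k_j}.\]
Second, I expand the $({\bf k_i},{\bf k_j})$ entry of $L_R^pL_R^q$ as a sum over ${\bf k}\in R$. The binomial convention makes every term vanish unless ${\bf k_j}\leq{\bf k}\leq{\bf k_i}$, and all such ${\bf k}$ lie in $R$. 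The entry is therefore exactly the left-hand side of the statement. Third, I apply Corollary \ref{PowersL} to $L_R^{p+q}=L_R^pL_R^q$, whose $({\bf k_i},{\bf k_j})$ entry is $(p+q)^{|{\bf k_i}|-|{\bf k_j}|}\binom{\bf k_i}{\bf k_j}$. Comparing the two expressions for this entry proves the statement.

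The only delicate point is the degenerate case where $p$, $q$ or $p+q$ is zero. The entry formula then involves $0^0$, which must be read as $1$, consistent with $L_R^0=I$. Negative exponents never occur, since every ${\bf k}$ in the sum satisfies ${\bf k_j}\leq{\bf k}\leq{\bf k_i}$, so the differences $|{\bf k_i}|-|{\bf k}|$ and $|{\bf k}|-|{\bf k_j}|$ are nonnegative.

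If one prefers not to rely on Corollary \ref{PowersL} in these edge cases, a direct check works instead. Both sides factor coordinatewise into products of one-dimensional sums. In each coordinate, $\binom{a}{c}\binom{c}{b}=\binom{a}{b}\binom{a-b}{c-b}$ together with the binomial theorem gives
\[\sum_{c=b}^{a}\binom{a}{c}\binom{c}{b}p^{a-c}q^{c-b}=\binom{a}{b}\sum_{c=b}^{a}\binom{a-b}{c-b}p^{a-c}q^{c-b}=\binom{a}{b}(p+q)^{a-b}.\]
Multiplying over the $n$ coordinates yields the multivariate identity.
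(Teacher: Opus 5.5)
Your proposal is correct and is essentially the paper's proof, which reads the identity off the $({\bf k_i},{\bf k_j})$ entry of $L_R^pL_R^q=L_R^{p+q}$ using the entry formula of Corollary \ref{PowersL}. Your explicit choice of the box $R=\{{\bf k}\in\Z^n_{\geq 0}\ |\ {\bf k}\leq{\bf k_i}\}$ and your remark on the $0^0$ convention make that one-line argument precise, and your fallback coordinatewise computation via $\binom{a}{c}\binom{c}{b}=\binom{a}{b}\binom{a-b}{c-b}$ and the binomial theorem is also valid; it has the advantage of not relying on Corollary \ref{PowersL}, whose proof for negative $p$ the paper only sketches.
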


{\em Proof.}
It is a direct consequence of $L_R^pL_R^q=L_R^{p+q}$. \hfill$\Box$

\vspace{3mm}

\begin{proposition}
Let $R\subseteq\Z^n_{\geq 0}$ be a set satisfying the monomial condition. The powers of the corresponding Pascal matrix $L_R$ are exponential matrices of the form
\[L_R^p=e^{pA_R}=I_r+pA_R+\frac{p^2}{2!}A_R^2+\cdots,\]
for $p\in\Z$, where $I_r$ is the identity matrix of size $r=|R|$, and $A_R$ is defined as
\[\big(A_R\big)_{{\bf k_i}{\bf k_j}}=\left\{
\begin{array}{cl}
\binom{\bf k_i}{\bf k_j} & \mbox{ if }|{\bf k_i}|=|{\bf k_j}|+1\\
\\
0 & \mbox{ otherwise}\\
\end{array}\right.\]
for ${\bf k_i},{\bf k_j}\in R$.
\end{proposition}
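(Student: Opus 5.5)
The plan is to compute the powers $A_R^m$ explicitly and then match the exponential series entrywise with the formula for $L_R^p$ given in Corollary \ref{PowersL}. The key claim is that for every $m\geq 0$ and ${\bf k_i},{\bf k_j}\in R$,
\[\big(A_R^m\big)_{{\bf k_i}{\bf k_j}}=\left\{\begin{array}{cl}
m!\binom{\bf k_i}{\bf k_j} & \mbox{ if }|{\bf k_i}|=|{\bf k_j}|+m,\\
0 & \mbox{ otherwise.}
\end{array}\right.\]
Here $\binom{\bf k_i}{\bf k_j}=0$ when ${\bf k_j}\not\leq{\bf k_i}$.

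I will prove this by induction on $m$. The cases $m=0,1$ hold by definition. For the inductive step, write
\[\big(A_R^{m+1}\big)_{{\bf k_i}{\bf k_j}}=\sum_{{\bf k}\in R}(A_R)_{{\bf k_i}{\bf k}}\big(A_R^m\big)_{{\bf k}{\bf k_j}}.\]
Only terms with $|{\bf k}|=|{\bf k_i}|-1$ and $|{\bf k}|=|{\bf k_j}|+m$ survive, together with ${\bf k_j}\leq{\bf k}\leq{\bf k_i}$. This forces $|{\bf k_i}|=|{\bf k_j}|+m+1$, and gives ${\bf k}={\bf k_i}-{\bf e_s}$ for some $s$ with $k_{i,s}>k_{j,s}$. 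By the monomial condition (Remark \ref{RemeqMC}(ii)) all such ${\bf k}$ lie in $R$. This is the main point where the hypothesis is used, and the only delicate step.

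The sum then becomes
\[m!\sum_{s}\binom{\bf k_i}{{\bf k_i}-{\bf e_s}}\binom{{\bf k_i}-{\bf e_s}}{\bf k_j}=m!\sum_s k_{i,s}\binom{{\bf k_i}-{\bf e_s}}{\bf k_j}.\]
I will evaluate it with the one-variable identity $k\binom{k-1}{j}=(k-j)\binom{k}{j}$ applied in coordinate $s$, the other coordinates being unchanged. This gives
\[m!\sum_s (k_{i,s}-k_{j,s})\binom{\bf k_i}{\bf k_j}=m!\,(|{\bf k_i}|-|{\bf k_j}|)\binom{\bf k_i}{\bf k_j}=(m+1)!\binom{\bf k_i}{\bf k_j}.\]
Terms with $k_{i,s}=k_{j,s}$ contribute zero on both sides, so summing over all $s$ is harmless.

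With the claim established, consider the $({\bf k_i},{\bf k_j})$ entry of $\sum_m \frac{p^m}{m!}A_R^m$. Only the term with $m=|{\bf k_i}|-|{\bf k_j}|$ contributes, which also shows the series is entrywise finite even for infinite $R$. That entry is $p^{|{\bf k_i}|-|{\bf k_j}|}\binom{\bf k_i}{\bf k_j}$, and it vanishes when $|{\bf k_i}|<|{\bf k_j}|$. By Corollary \ref{PowersL}, this is exactly the entry of $L_R^p$ for every $p\in\Z$, including negative $p$ and $p=0$ with the convention $0^0=1$.
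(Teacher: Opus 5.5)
Your proof is correct, and it takes a different route from the paper's. The paper defines $L_R(t)=\bigl(t^{|{\bf k_i}|-|{\bf k_j}|}\binom{\bf k_i}{\bf k_j}\bigr)$ and checks that $L_R(t){\bf y}(0)$ solves ${\bf y}'=A_R{\bf y}$. It then uses uniqueness of solutions to get $L_R(t)=e^{tA_R}$, and finally sets $t=p$ via Corollary \ref{PowersL}. You instead compute $A_R^m$ in closed form by induction and sum the series entrywise.

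The combinatorial core is the same in both. Each uses the monomial condition to guarantee ${\bf k_i}-{\bf e_s}\in R$, together with the identity $\sum_s k_{i,s}\binom{{\bf k_i}-{\bf e_s}}{\bf k_j}=(|{\bf k_i}|-|{\bf k_j}|)\binom{\bf k_i}{\bf k_j}$. In fact, reading the paper's equation $\frac{d}{dt}L_R(t)=A_RL_R(t)$ coefficientwise in $t$ gives exactly your recursion $A_R^{m+1}=A_RA_R^m$, with $A_R^m/m!$ the $t^m$-part of $L_R(t)$.

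Your version has these advantages:
\begin{itemize}
\item It is purely algebraic and needs no existence--uniqueness theorem for a linear ODE system. For infinite $R$ that theorem would need its own justification; it holds here only because the system is triangular with respect to degree.
\item It shows directly that each entry of $e^{pA_R}$ is a finite sum.
\item The explicit formula for $A_R^m$ sharpens the paper's nilpotency remark: since ${\bf 0}\in R$, $A_R^\ell=0$ as soon as $\ell>\max_{{\bf k}\in R}|{\bf k}|$, which matches the paper's five-point example.
\end{itemize}
The paper's ODE framing gives the continuous interpolation $L_R(t)=e^{tA_R}$, but your entrywise computation yields that for any $t$ just as well.
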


{\em Proof.} Exponential matrices arise naturally as solutions of systems of differential equations. Let
\[{\bf y}(t)=\left(\begin{array}{c}
y_{\bf k_1}(t)\\
\vdots\\
y_{\bf k_j}(t)\\
\vdots\\
\end{array}\right)_{{\bf k_j}\in R}\]
be a vector of functions (as always indexed by the elements in $R$), and consider the system of equations
\begin{equation}
\begin{array}{l}
\frac{d{\bf y}(t)}{dt}=A_R{\bf y}(t),\\
\\
{\bf y}(0)=(y_{\bf k_1}(0),\ldots,y_{\bf k_j}(0),\ldots).\\
\end{array}
\label{system}
\end{equation}
The unique solution to this system is
\[{\bf y}(t)=e^{A_Rt}{\bf y}(0).\]
Let us define the matrix
\[L_R(t)=\left(t^{|{\bf k_i}|-|{\bf k_j}|}\binom{\bf k_i}{\bf k_j}\right)_{{\bf k_i},{\bf k_j}\in R}\]
We claim that $L_R(t){\bf y}(0)$ is solution to the system (\ref{system}). Indeed, if ${\bf y}(t)=L_R(t){\bf y}(0)$, then, for any ${\bf k_i}\in R$, the ${\bf k_i}$-coordinate of the vector of functions is
\[y_{\bf k_i}(t)=\sum_{{\bf k}\in R}t^{|{\bf k_i}|-|{\bf k}|}\binom{\bf k_i}{\bf k}y_{\bf k}(0)\]
Then, the ${\bf k_i}$-coordinate of $A_R{\bf y}(t)$, denoted $a_{\bf k_i}$, is
\[a_{\bf k_i}=\sum_{{\bf k}\in R,\ |{\bf k_i}|=|{\bf k}|+1}\binom{\bf k_i}{\bf k}\sum_{{\bf k'}\in R}t^{|{\bf k}|-|{\bf k'}|}\binom{\bf k}{\bf k'}y_{\bf k'}(0)\]
Notice that if $|{\bf k_i}|=|{\bf k}|+1$ and $\binom{\bf k_i}{\bf k}\neq 0$, then
\[{\bf k}={\bf k_i}-{\bf e_j},\ \mbox{ for }j=1,\ldots,n\]
and since $R$ satisfies the monomial condition all such ${\bf k}$ belongs to $R$. Then
\[\begin{array}{ll}
a_{\bf k_i} & = \sum_{j=1}^n\sum_{{\bf k'}\in R}\binom{\bf k_i}{{\bf k_i}-{\bf e_j}}\binom{{\bf k_i}-{\bf e_j}}{\bf k'}t^{|{\bf k_i}|-1-|{\bf k'}|}y_{\bf k'}(0)\\
\\
 & =\sum_{{\bf k'}\in R}t^{|{\bf k_i}|-|{\bf k'}|-1}y_{\bf k'}(0)\sum_{j=1}^n\binom{\bf k_i}{{\bf k_i}-{\bf e_j}}\binom{{\bf k_i}-{\bf e_j}}{\bf k'}\\
 \end{array}\]
Denoting ${\bf k_i}=(k_i^{(1)},\ldots,k_i^{(n)})$, we have
\[\sum_{j=1}^n\binom{\bf k_i}{{\bf k_i}-{\bf e_j}}\binom{{\bf k_i}-{\bf e_j}}{\bf k'}=k_i^{(1)}\binom{{\bf k_i}-{\bf e_1}}{\bf k'}+\cdots+k_i^{(n)}\binom{{\bf k_i}-{\bf e_n}}{\bf k'}=\]
\[=\frac{{\bf k_i}!}{{\bf k'}!}\Big(\frac{1}{({\bf k_i}-{\bf e_1}-{\bf k'})!}+\cdots+\frac{1}{({\bf k_i}-{\bf e_n}-{\bf k'})!}\Big)=\binom{\bf k_i}{\bf k'}\big(|{\bf k_i}|-|{\bf k'}|\big)\]
Therefore we have proved that
\[a_{\bf k_i}=\frac{d y_{\bf k_i}}{dt},\]
or, in other words, $L_R(t){\bf y}(0)$ is a solution to the system (\ref{system}).

By Corollary \ref{PowersL} we have that $L_R(p)=L_R^p$ and we are done.
\hfill$\Box$

\vspace{3mm}

\begin{remark}
If $R$ is a finite set, then $A_R^\ell$ is the zero matrix for $\ell\geq r=|R|$.
\end{remark}

\vspace{3mm}

\begin{example}
Let $R=\{(0,0),(0,1),(1,0),(1,1),(2,0)\}$. Then
\[A_R=\left(\begin{array}{ccccc}
0 & 0 & 0 & 0 & 0\\
1 & 0 & 0 & 0 & 0\\
1 & 0 & 0 & 0 & 0\\
0 & 1 & 1 & 0 & 0\\
0 & 0 & 2 & 0 & 0\\
\end{array}\right)\]
Notice that in this example $A_R^\ell$ is zero for $\ell\geq 3$.
\end{example}

\vspace{3mm}

The matrix $A_R$ does not seem to satisfy the properties of its classical analogue, the so-called {\em creation matrix} or {\em derivation matrix} (see \cite{AcetoTrigiante}, where the authors use them to define the Pascal matrices).

\vspace{5mm}

Exactly as the identity in Lemma \ref{anBin} (i) is the key for Lemma \ref{lem1}, we can use Lemma \ref{anBin} (ii) to derive more identities, but in this case we need to deal with infinite matrices.

\begin{definition}
By $L$, $U$ and $S$ we denote the corresponding infinite matrices associated with the set $\Z^n_{\geq 0}$ (ordered by the total order $<_T$).
\end{definition}

\begin{remark}
Recall that a set $R$ satisfying the monomial condition is not necessarily finite. In particular the set $R=\Z^n_{\geq 0}$ satisfies the monomial condition. Hence the results above hold for  the matrices $L$, $U$ and $S$.
\label{inf}
\end{remark}

\vspace{3mm}

\begin{lemma}
\[U\left(\begin{array}{c}
\\
\vdots\\
\\
{\bf x}^{\bf k}\\
\\
\vdots\\
\\
\end{array}\right)=\left(\begin{array}{c}
\\
\vdots\\
\\
\frac{{\bf x}^{\bf k}}{\big({\bf 1}-{\bf x}\big)^{{\bf k}+{\bf 1}}}\\
\\
\vdots\\
\\
\end{array}\right)\]
where $\big({\bf 1}-{\bf x}\big)^{{\bf k}+{\bf 1}}=\prod_{j=1}^n(1-x_j)^{k_j+1}$.
\label{lem2}
\end{lemma}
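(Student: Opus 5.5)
The plan is to compute the ${\bf k}$-th component of $U{\bf x}$ directly and reduce it, coordinate by coordinate, to the one-variable identity behind Lemma \ref{anBin} (ii).

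First, fix ${\bf k}\in\Z^n_{\geq 0}$. Since $U=L^T$, the $({\bf k},{\bf k'})$ entry of $U$ is $\binom{\bf k'}{\bf k}$, which vanishes unless ${\bf k}\leq{\bf k'}$. So the ${\bf k}$-th component of $U$ applied to the vector $({\bf x}^{\bf k'})_{{\bf k'}}$ is
\[\sum_{{\bf k'}\in\Z^n_{\geq 0},\ {\bf k'}\geq{\bf k}}\binom{\bf k'}{\bf k}{\bf x}^{\bf k'}.\]
This sum is infinite, so I read it as an identity in $\C[[x_1,\ldots,x_n]]$. It is well defined there, because each monomial ${\bf x}^{\bf k'}$ occurs exactly once, with a finite coefficient.

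Next, substitute ${\bf k'}={\bf k}+{\bf m}$ with ${\bf m}\in\Z^n_{\geq 0}$ and factor out ${\bf x}^{\bf k}$. This leaves
\[{\bf x}^{\bf k}\sum_{{\bf m}\in\Z^n_{\geq 0}}\binom{{\bf k}+{\bf m}}{\bf k}{\bf x}^{\bf m}.\]
Using the symmetry $\binom{{\bf k}+{\bf m}}{\bf k}=\binom{{\bf k}+{\bf m}}{\bf m}$, which holds coordinatewise, this becomes
\[{\bf x}^{\bf k}\sum_{{\bf m}}\binom{({\bf k}+{\bf 1})+{\bf m}-{\bf 1}}{\bf m}{\bf x}^{\bf m}.\]
Now apply Lemma \ref{anBin} (ii) with exponent vector ${\bf k}+{\bf 1}$. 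It gives
\[\sum_{{\bf m}}\binom{({\bf k}+{\bf 1})+{\bf m}-{\bf 1}}{\bf m}{\bf x}^{\bf m}=\prod_{j=1}^n(1-x_j)^{-(k_j+1)}=\frac{1}{({\bf 1}-{\bf x})^{{\bf k}+{\bf 1}}}.\]
Multiplying back by ${\bf x}^{\bf k}$ yields the claimed component $\frac{{\bf x}^{\bf k}}{({\bf 1}-{\bf x})^{{\bf k}+{\bf 1}}}$.

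An alternative route, which also serves as a check, is to note that the multivariate sum factors as a product over $j$ of the one-variable series $\sum_{m_j\geq 0}\binom{k_j+m_j}{k_j}x_j^{m_j}=(1-x_j)^{-k_j-1}$. This mirrors the factorization argument the paper uses in the proof of the generalized Vandermonde identity.

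There is no real obstacle here. The only delicate point is the interpretation of the product of an infinite matrix with an infinite vector. I will state explicitly that each component is a formal power series and that the equality holds in $\C[[x_1,\ldots,x_n]]$, with convergence for $|x_j|<1$ if an analytic reading is preferred. The index bookkeeping in the substitution ${\bf k'}={\bf k}+{\bf m}$ is routine.
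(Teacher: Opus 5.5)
Your proof is correct and follows the paper's argument: write the ${\bf k}$-th entry as $\sum_{{\bf k'}\geq{\bf k}}\binom{\bf k'}{\bf k}{\bf x}^{\bf k'}$, shift the index to factor out ${\bf x}^{\bf k}$, and apply Lemma \ref{anBin} (ii) with exponent vector ${\bf k}+{\bf 1}$. You also make explicit two points the paper leaves implicit, namely the symmetry $\binom{{\bf k}+{\bf m}}{\bf k}=\binom{{\bf k}+{\bf m}}{\bf m}$ and the reading of the identity in $\C[[x_1,\ldots,x_n]]$.
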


{\em Proof.}
Let us denote by $u_{{\bf k}{\bf k'}}$ the entries of the matrix $U$. For any ${\bf k}\in\Z^n_{\geq 0}$,
\[\begin{array}{ll}
\sum_{{\bf k'}\in\Z^n_{\geq 0}}u_{{\bf k}{\bf k'}}{\bf x}^{\bf k'} & =\sum_{{\bf k'}\geq_T{\bf k}}\binom{\bf k'}{\bf k}{\bf x}^{\bf k'}\\
\\
 & ={\bf x}^{\bf k}\sum_{{\bf k'}\in\Z^n_{\geq 0}}\binom{{\bf k'}+{\bf k}}{\bf k}{\bf x}^{\bf k'}\\
 \end{array}\]
and the result follows by Lemma \ref{anBin} (ii). Notice that we use the easy remark that $\{{\bf k}\geq_T{\bf 0}\}=\Z^n_{\geq 0}$.
\hfill$\Box$

\vspace{3mm}

\begin{lemma}
\[U\left(\begin{array}{c}
\vdots\\
\frac{{\bf x}^{\bf k}}{{\bf k}!}\\
\vdots\\
\end{array}\right)=e^{x_1+\cdots+x_n}\left(\begin{array}{c}
\vdots\\
\frac{{\bf x}^{\bf k}}{{\bf k}!}\\
\vdots\\
\end{array}\right)\]
\end{lemma}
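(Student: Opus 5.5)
Fix a row index ${\bf k}\in\Z^n_{\geq 0}$ and compute the ${\bf k}$-th entry of the left-hand side directly, exactly as in the proof of Lemma \ref{lem2}. The entries of $U$ are $u_{{\bf k}{\bf k'}}=\binom{\bf k'}{\bf k}$, which vanish unless ${\bf k}\leq{\bf k'}$. So the ${\bf k}$-th entry is
\[\sum_{{\bf k'}\geq{\bf k}}\binom{\bf k'}{\bf k}\frac{{\bf x}^{\bf k'}}{{\bf k'}!}=\sum_{{\bf k'}\geq{\bf k}}\frac{{\bf x}^{\bf k'}}{{\bf k}!\,({\bf k'}-{\bf k})!},\]
using $\binom{\bf k'}{\bf k}=\frac{{\bf k'}!}{{\bf k}!({\bf k'}-{\bf k})!}$ componentwise.

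Next substitute ${\bf m}={\bf k'}-{\bf k}$, which ranges over all of $\Z^n_{\geq 0}$; the remark that $\{{\bf k'}\geq_T{\bf 0}\}=\Z^n_{\geq 0}$ is used here as in Lemma \ref{lem2}. Factor out $\frac{{\bf x}^{\bf k}}{{\bf k}!}$ to get
\[\frac{{\bf x}^{\bf k}}{{\bf k}!}\sum_{{\bf m}\in\Z^n_{\geq 0}}\frac{{\bf x}^{\bf m}}{{\bf m}!}=\frac{{\bf x}^{\bf k}}{{\bf k}!}\prod_{j=1}^n\sum_{m_j\geq 0}\frac{x_j^{m_j}}{m_j!}=\frac{{\bf x}^{\bf k}}{{\bf k}!}\,e^{x_1}\cdots e^{x_n}.\]
The product factorization holds because the multi-index sum splits coordinatewise, as in the proof of (\ref{eqVand}). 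The last expression equals $e^{x_1+\cdots+x_n}\frac{{\bf x}^{\bf k}}{{\bf k}!}$, which is the ${\bf k}$-th entry of the right-hand side.

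The only delicate point is making sense of the infinite sum in each entry. I would interpret everything in $\C[[x_1,\ldots,x_n]]$. Each row sum is well defined there, since every monomial ${\bf x}^{\bf k'}$ appears exactly once, and the regrouping into a product of univariate exponential series is a legitimate identity of formal power series. Alternatively one can argue analytically by absolute convergence for all ${\bf x}\in\C^n$, which also justifies the reordering. I do not expect any other obstacle: the monomial condition plays no role beyond $R=\Z^n_{\geq 0}$ itself.
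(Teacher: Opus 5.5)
Your proposal is correct and follows the paper's proof essentially verbatim. You restrict the ${\bf k}$-th row sum to ${\bf k'}\geq{\bf k}$, rewrite $\binom{\bf k'}{\bf k}/{\bf k'}!$ as $1/({\bf k}!({\bf k'}-{\bf k})!)$, factor out ${\bf x}^{\bf k}/{\bf k}!$, and recognize the remaining sum as $e^{x_1+\cdots+x_n}$. Your formal-power-series justification and coordinatewise factorization just make explicit what the paper leaves implicit. One small quibble: the reindexing ${\bf m}={\bf k'}-{\bf k}$ is already a bijection from $\{{\bf k'}\in\Z^n_{\geq 0} : {\bf k'}\geq{\bf k}\}$ onto $\Z^n_{\geq 0}$, so the remark about $\geq_T$ from Lemma \ref{lem2} is not what is needed here.
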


{\em Proof.}
For any ${\bf k}\in\Z_{\geq 0}^n$, the ${\bf k}$-row of $U$ times $\big(\ldots,\frac{{\bf x}^{\bf k}}{{\bf k}!},\ldots\big)$ gives
\[\begin{array}{ll}
\sum_{{\bf k'}\in\Z_{\geq 0}^n}\binom{{\bf k'}}{\bf k}\frac{{\bf x}^{\bf k'}}{{\bf k'}!} & =\sum_{{\bf k'}\geq{\bf k}}\binom{{\bf k'}}{\bf k}\frac{{\bf x}^{\bf k'}}{{\bf k'}!}\\
 & =\frac{{\bf x}^{\bf k}}{{\bf k}!}\sum_{{\bf k'}\geq{\bf k}}\frac{1}{({\bf k'}-{\bf k})!}{\bf x}^{{\bf k'}-{\bf k}}\\
 & =\frac{{\bf x}^{\bf k}}{{\bf k}!}e^{x_1+\cdots+x_n}.\\
 \end{array}\]
\hfill$\Box$

\vspace{3mm}

\begin{lemma}
\[S\left(\begin{array}{c}
\vdots\\
\\
{\bf x}^{\bf k}\\
\\
\vdots\\
\end{array}\right)=\left(\begin{array}{c}
\vdots\\
\\
\frac{1}{\big({\bf 1}-{\bf x}\big)^{{\bf k}+{\bf 1}}}\\
\\
\vdots\\
\end{array}\right)\]
where $\big({\bf 1}-{\bf x}\big)^{\bf k}=(1-x_1)^{k_1}\cdots(1-x_n)^{k_n}$ for ${\bf k}=(k_1,\ldots,k_n)\in\Z^n_{\geq 0}$.
\label{lem3}
\end{lemma}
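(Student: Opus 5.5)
Two routes are available: a direct computation with Lemma \ref{anBin} (ii), and a factorization through Lemma \ref{LU}. I will do the direct computation, which is essentially the argument of Lemma \ref{lem2}, and use the factorization only as a check. The ${\bf k}$-th row of $S$ applied to the vector $\big({\bf x}^{\bf k'}\big)_{{\bf k'}\in\Z^n_{\geq 0}}$ gives the formal power series
\[\sum_{{\bf k'}\in\Z^n_{\geq 0}}\binom{{\bf k}+{\bf k'}}{\bf k}{\bf x}^{\bf k'}.\]
Here $\binom{{\bf k}+{\bf k'}}{\bf k}=\binom{{\bf k}+{\bf k'}}{\bf k'}$ holds coordinatewise by symmetry of ordinary binomials. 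The series is therefore exactly the right-hand side of Lemma \ref{anBin} (ii) with ${\bf k}$ replaced by ${\bf k}+{\bf 1}$, since $({\bf k}+{\bf 1})+{\bf k'}-{\bf 1}={\bf k}+{\bf k'}$. It equals $\prod_{j}(1-x_j)^{-(k_j+1)}=1/({\bf 1}-{\bf x})^{{\bf k}+{\bf 1}}$, which is the claimed ${\bf k}$-th entry.

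As a consistency check I will use Lemma \ref{LU} together with Remark \ref{inf} to write $S=LU$. By Lemma \ref{lem2}, $U({\bf x}^{\bf k'})_{\bf k'}=\big({\bf x}^{\bf k'}/({\bf 1}-{\bf x})^{{\bf k'}+{\bf 1}}\big)_{\bf k'}$. Applying $L$ to this vector, the ${\bf k}$-th entry is
\[\frac{1}{({\bf 1}-{\bf x})^{\bf 1}}\sum_{{\bf k'}\leq{\bf k}}\binom{\bf k}{\bf k'}\Big(\frac{{\bf x}}{{\bf 1}-{\bf x}}\Big)^{\bf k'}.\]
By Lemma \ref{anBin} (i), applied coordinatewise with $x_j/(1-x_j)$ in place of $x_j$, this becomes
\[\prod_j\frac{1}{1-x_j}\Big(1+\frac{x_j}{1-x_j}\Big)^{k_j}=\prod_j(1-x_j)^{-k_j-1},\]
which agrees with the direct computation.

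The step that needs care is justifying the infinite matrix–vector product, together with associativity $(LU)v=L(Uv)$ in the second route. In the direct computation each entry is a formal power series in which the coefficient of ${\bf x}^{\bf k'}$ receives exactly one contribution, so the product is well defined in $\C[[x_1,\ldots,x_n]]$. In the $LU$ route, $L$ is row-finite, because entries vanish unless ${\bf k'}\leq{\bf k}$. Each coefficient of the resulting series is therefore a finite sum, and associativity holds in $\C[[{\bf x}]]$. For this reason I will present the direct argument as the proof and keep the factorization only as a remark.
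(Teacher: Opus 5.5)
Your proof is correct, but your main argument is not the paper's. The paper's proof is exactly your consistency check: $S=LU$ by Lemma \ref{LU} with $R=\Z^n_{\geq 0}$, then Lemma \ref{lem2} for the action of $U$, then Lemma \ref{anBin} (i) in the variables $x_j/(1-x_j)$ to collapse the finite sum contributed by $L$.

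Your direct route reads row ${\bf k}$ of $S$ as $\sum_{\bf k'}\binom{{\bf k}+{\bf k'}}{\bf k'}{\bf x}^{\bf k'}$ and recognizes it as Lemma \ref{anBin} (ii) with exponent ${\bf k}+{\bf 1}$. It is the same one-line computation as the proof of Lemma \ref{lem2}. It needs neither the factorization $S=LU$ nor the Vandermonde identity (\ref{eqVand}) behind it. It also avoids associativity of infinite matrix products, which the paper uses without comment and which you rightly justify by the row-finiteness of $L$.

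What the paper's route buys is a structural reading. The lemma appears as the composition of the generating-function actions of $U$ and $L$ through the Cholesky factorization, in line with the rest of the section.

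One further observation: your evaluation of $L$ on the vector produced by Lemma \ref{lem2} does not itself use Lemma \ref{LU}. So comparing coefficients of ${\bf x}^{\bf j}$ in your two answers gives $\sum_{{\bf k'}\leq{\bf k}}\binom{\bf k}{\bf k'}\binom{\bf j}{\bf k'}=\binom{{\bf k}+{\bf j}}{\bf k}$. In other words, your check amounts to an independent generating-function proof of $S=LU$ for $R=\Z^n_{\geq 0}$.
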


{\em Proof.} By Lemma \ref{LU} and Lemma \ref{lem2} with $R=\Z^n_{\geq 0}$, we have
\[\begin{array}{ll}
\sum_{{\bf k'}\in\Z^n_{\geq 0}}s_{{\bf k}{\bf k'}}{\bf x}^{\bf k'} & =\sum_{{\bf k'}\in\Z^n_{\geq 0}}\ell_{{\bf k}{\bf k'}}\frac{{\bf x}^{\bf k'}}{({\bf 1}-{\bf x})^{{\bf k'}+{\bf 1}}}\\
\\
 & =\sum_{{\bf k'}\leq{\bf k}}\binom{\bf k}{\bf k'}\frac{{\bf x}^{\bf k'}}{({\bf 1}-{\bf x})^{{\bf k'}+{\bf 1}}}\\
 \\
 & =\frac{1}{{\bf 1}-{\bf x}}\sum_{{\bf k'}\leq{\bf k}}\binom{\bf k}{\bf k'}\left(\frac{\bf x}{{\bf 1}-{\bf x}}\right)^{{\bf k'}}\\
\end{array}\]
and the result follows by Lemma \ref{anBin} (i).
\hfill$\Box$

\subsection{Binomial transform of sequences}

If we consider sequences $\{a_{\bf k}\}_{\bf k}$ depending on parameter vectors ${\bf k}\in\Z^n_{\geq 0}$, then the multidimensional binomial transform can be defined as
\[b_{\bf k}:=\sum_{{\bf i}\leq{\bf k}}\binom{\bf k}{\bf i}a_{\bf i}.\]

Considering the sequences $\{a_{\bf k}\}$ and $\{b_{\bf k}\}$ as column vectors $A$ and $B$ (ordered by the graded reverse lexicographic order $<_T$), we can write the binomial transform in matrix terms as
\begin{equation}
LA=B,
\label{BinTMat}
\end{equation}
where $L$ is the infinite lower triangular Pascal matrix. Notice that in Lemma \ref{lem1} we have proved that the sequence $\{\big({\bf 1}+{\bf x}\big)^{\bf k}\}_{\bf k}$ is the binomial transform of the sequence $\{{\bf x}^{\bf k}\}_{\bf k}$. More generally, for $p\in\Z$, the binomial transform of the sequence $\{\big((p-1){\bf 1}+{\bf x}\big)^{\bf k}\}_{\bf k}$ is the sequence $\{\big(p{\bf 1}+{\bf x}\big)^{\bf k}\}_{\bf k}$.

\vspace{2mm}

Notice that, since both $L$ and $L_R$ are lower-triangular, the {\em truncation} of (\ref{BinTMat}) also holds:
\begin{equation}
L_R\left(\begin{array}{c}
\vdots\\
a_{\bf k}\\
\vdots\\
\end{array}\right)_{{\bf k}\in R}=\left(\begin{array}{c}
\vdots\\
b_{\bf k}\\
\vdots\\
\end{array}\right)_{{\bf k}\in R}
\label{BinTMatTr}
\end{equation}
for any set $R\subseteq\Z^n_{\geq 0}$ satisfying the monomial condition.

\subsection{Relation with Stirling and Vandermonde matrices}

The Stirling numbers of second kind $S(n,k)$ are well known combinatorial numbers (see for instance \cite{Boy} for an introduction on these combinatorial numbers) that can be defined as
\[x^k=\sum_{n=0}^\infty S(n,k)n!\binom{x}{n}\]
This identity could be seen as a first step of a tight relation between binomial coefficients and Stirling numbers when we give to $x$ a positive integer value.
Indeed, if we define the factorial Stirling matrices as
\[\bar S_n=\Big(i!S(i,j)\Big)_{0\leq i,j\leq n},\]
the equations above for $x=0,1,\ldots, n$ can be written in matrix form as follows:
\begin{equation}
L_n\bar S_n=\left(\begin{array}{ccccc}
1 & 0 & 0 & \cdots & 0\\
1 & 1 & 1 & \cdots & 1\\
1 & 2 & 2^2 & \cdots & 2^n\\
\\
 & & & \cdots & \\
 \\
 1 & n & n^2 & \cdots & n^n\\
 \end{array}\right)
\label{firstRel}
\end{equation}
where $L_n$ denotes the classical lower-triangular Pascal matrix of order $n+1$. Notice that the third matrix is of Vandermonde type.

Many relations between the Pascal matrices and (factorial) Stirling matrices together with  Vandermonde matrices have been found in the classical situation. We cite Theorem 2.4 in \cite{CheonKim1} or Theorem 2.1 in \cite{CheonKim2}. See also \cite{El-M}, \cite{MikCheon} and \cite{YY}.

\vspace{2mm}

Using binomial transform of sequences we will generalize the factorization in (\ref{firstRel}), relating the multivariate Pascal matrix with a generalized Stirling matrix whose entries are generalizations of Stirling numbers, namely the Stirling polynomials of second kind, as defined in \cite{Cobo1}.

\begin{definition}
Given ${\bf k}\in\Z_{\geq 0}^n$ and $\ell\in\Z_{\geq 0}$, the Stirling polynomials of second kind $S_{\bf k}^{(\ell)}(x_0,x_1,\ldots,x_n)$ are defined by the generating function
\[\frac{1}{{\bf k}!}e^{x_0t}\prod_{j=1}^n\big(e^{x_jt}-1\big)^{k_j}=\sum_{\ell=0}^\infty S_{\bf k}^{(\ell)}(x_0,x_1,\ldots,x_n)\frac{t^\ell}{\ell!}.\]
\end{definition}
These polynomials appear naturally in the Weyl algebra, since for any $(\alpha_0,\ldots,\alpha_n)\in\C^{n+1}$ and any $\ell\in\Z_{\geq 0}$ we have
\[\big(\alpha_0+\alpha_1x_1\partial_1+\cdots+\alpha_nx_n\partial_n\big)^\ell=\sum_{{\bf k}\in\Z^n_{\geq 0},\ |{\bf k}|\leq\ell}S_{\bf k}^{(\ell)}(\alpha_0,\ldots,\alpha_n)x_1^{k_1}\partial_1^{k_1}\cdots x_n^{k_n}\partial_n^{k_n}.\]
The Stirling numbers of second kind are a specialization of the Stirling polynomials:
\[S_{k{\bf e_i}}^{(\ell)}\big(x_0=0,x_i=1\big)=S(\ell,k).\]

A closed formula for the Stirling polynomials in terms of the Stirling numbers is the following (see \cite{Cobo1})
\begin{equation}
S_{\bf k}^{(\ell)}({\bf x})=\sum_{{\bf i}\geq {\bf k},\ |{\bf i}|\leq\ell}\binom{\ell}{{\bf i}}\Big(\prod_{j=1}^n S(i_j,k_j)\Big)x_0^{\ell-|{\bf i}|}x_1^{i_1}\cdots x_n^{i_n}\in\Z[x_0,\ldots,x_n].
\label{C2def}
\end{equation}

\begin{definition}
For any ${\bf k}\in\Z^n_{\geq 0}$ we define the linear form
\[A_{\bf k}=x_0+k_1x_1+\cdots+k_nx_n.\]

\end{definition}

\begin{proposition} (Proposition 1 in \cite{Cobo1})
 Let $\ell$ be a positive integer. The polynomial sequence $\{A_{\bf k}^\ell\}_{\bf k}$ is the binomial transform of the polynomial sequence $\{{\bf k}!S_{\bf k}^{(\ell)}\}_{\bf k}$.
\label{BinSeq}
\end{proposition}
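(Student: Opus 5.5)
We must show that for every ${\bf k}\in\Z^n_{\geq 0}$
\[A_{\bf k}^\ell=\sum_{{\bf i}\leq{\bf k}}\binom{\bf k}{\bf i}\,{\bf i}!\,S_{\bf i}^{(\ell)}(x_0,\ldots,x_n).\]
The plan is to prove this identity one coefficient of $t^\ell/\ell!$ at a time, working with exponential generating functions in $t$ rather than with the closed formula (\ref{C2def}).

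First I would multiply the defining generating function of the Stirling polynomials by ${\bf i}!$. This gives
\[\sum_{\ell\geq 0}{\bf i}!\,S_{\bf i}^{(\ell)}\frac{t^\ell}{\ell!}=e^{x_0t}\prod_{j=1}^n\big(e^{x_jt}-1\big)^{i_j}.\]
Next I would multiply by $\binom{\bf k}{\bf i}$ and sum over ${\bf i}\leq{\bf k}$. All sums involved are finite, so the summation over ${\bf i}$ can be interchanged with the $t$-expansion without any convergence issue. The summed generating function then becomes
\[e^{x_0t}\sum_{{\bf i}\leq{\bf k}}\binom{\bf k}{\bf i}\prod_{j=1}^n\big(e^{x_jt}-1\big)^{i_j}.\]

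The main step is to evaluate this sum with Lemma \ref{anBin} (i), substituting $e^{x_jt}-1$ for $x_j$. The substitution is legitimate because Lemma \ref{anBin} (i) is a polynomial identity. It gives
\[\prod_{j=1}^n\big(1+(e^{x_jt}-1)\big)^{k_j}=\prod_{j=1}^n e^{k_jx_jt}.\]
Hence the whole expression equals
\[e^{(x_0+k_1x_1+\cdots+k_nx_n)t}=e^{A_{\bf k}t}=\sum_{\ell\geq 0}A_{\bf k}^\ell\frac{t^\ell}{\ell!}.\]

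Finally I would compare the coefficients of $t^\ell/\ell!$ on both sides, which yields the desired identity for every $\ell\geq 0$, and in particular for every positive $\ell$. There is no real obstacle in this argument. The only points needing care are justifying the interchange of finite sums with the formal power series in $t$, and noting that we work in $\Z[x_0,\ldots,x_n][[t]]$, so the substitution into Lemma \ref{anBin} (i) is valid. In the paper's matrix language, the result says that $L$ applied to the column vector $\big({\bf k}!S_{\bf k}^{(\ell)}\big)_{\bf k}$ gives $\big(A_{\bf k}^\ell\big)_{\bf k}$, in the sense of (\ref{BinTMat}).
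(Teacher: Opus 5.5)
Your proof is correct. The paper gives no argument of its own for this statement. It quotes it as Proposition 1 of \cite{Cobo1} and then uses it as a black box in Theorem \ref{Decomp}. So the real comparison is between citing the result and your self-contained derivation.

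Your route needs only the defining exponential generating function and Lemma \ref{anBin} (i). After multiplying by ${\bf i}!$ and summing against $\binom{\bf k}{\bf i}$, the sum factors over $j$. Each factor collapses to $\sum_{i_j=0}^{k_j}\binom{k_j}{i_j}(e^{x_jt}-1)^{i_j}=e^{k_jx_jt}$, giving $e^{A_{\bf k}t}$ at once.

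The paper's quoted material suggests two other routes:
\begin{enumerate}
\item[(a)] expand $A_{\bf k}^\ell$ by the multinomial theorem and use (\ref{C2def}) together with $k^{i}=\sum_{m}\binom{k}{m}m!\,S(i,m)$;
\item[(b)] apply the Weyl-algebra identity to ${\bf x}^{\bf k}$, using $x_j^{i_j}\partial_j^{i_j}x_j^{k_j}=i_j!\binom{k_j}{i_j}x_j^{k_j}$.
\end{enumerate}
Each of these depends on a further fact the paper also states without proof. Yours is therefore the most economical.

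Your version also holds for every $\ell\geq 0$, and that is useful. The case $\ell=0$ (where ${\bf i}!S_{\bf i}^{(0)}=\delta_{{\bf i}{\bf 0}}$) is needed for the $j=0$ column in Theorem \ref{Decomp}. The hypothesis ``$\ell$ positive'' in the statement does not literally cover that column.

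One small correction concerns the coefficient ring. It cannot be $\Z[x_0,\ldots,x_n][[t]]$, because $e^{x_0t}$ has coefficient $x_0^2/2$ at $t^2$. Work in $\mathbb{Q}[x_0,\ldots,x_n][[t]]$ instead. The coefficients of $t^\ell/\ell!$ on both sides still lie in $\Z[x_0,\ldots,x_n]$.
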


\begin{definition}
Given an ordered set of points $R\subseteq\Z^n_{\geq 0}$ and given $\ell\in\Z_{\geq 0}$, we define the following matrices of size $|R|\times(\ell+1)$:
\begin{enumerate}
\item[(i)] The Vandermonde matrix
\[V_{R,\ell}=\left(A_{\bf k}^j\right)_{{\bf k}\in R,\ 0\leq j\leq\ell}.\]

\item[(ii)] The generalized factorial Stirling matrix
\[\mathcal S_{R,\ell}=\left({\bf k}!S_{\bf k}^{(j)}\right)_{{\bf k}\in R,\ 0\leq j\leq\ell}.\]
\end{enumerate}
In both cases the rows of the matrix are ordered as the elements in $R$, while the columns are ordered by non-negative integers $\leq\ell$.
\end{definition}

\begin{theorem}
Let $R$ be a set of points satisfying the monomial condition, and let $\ell$ be a positive integer. Then
\[L_R \mathcal S_{R,\ell}=V_{R,\ell}.\]
\label{Decomp}
\end{theorem}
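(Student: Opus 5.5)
We prove the identity column by column. Fix $0\leq j\leq\ell$ and compare the $j$-th column of both sides. The $j$-th column of $\mathcal S_{R,\ell}$ is the vector $\big({\bf k}!S_{\bf k}^{(j)}\big)_{{\bf k}\in R}$, and the $j$-th column of $V_{R,\ell}$ is $\big(A_{\bf k}^j\big)_{{\bf k}\in R}$. So the statement reduces to showing, for every ${\bf k}\in R$,
\[\sum_{{\bf k'}\in R}\binom{\bf k}{\bf k'}{\bf k'}!S_{\bf k'}^{(j)}=A_{\bf k}^j .\]

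First we remove the dependence on $R$. Since $\binom{\bf k}{\bf k'}=0$ unless ${\bf k'}\leq{\bf k}$, the sum runs only over ${\bf k'}\in R$ with ${\bf k'}\leq{\bf k}$. Because $R$ satisfies the monomial condition, Remark \ref{RemeqMC}(i) says this set is all of $\{{\bf k'}\in\Z^n_{\geq 0}\mid {\bf k'}\leq{\bf k}\}$. The left-hand side is therefore exactly the ${\bf k}$-th term $b_{\bf k}$ of the multidimensional binomial transform of the sequence $a_{\bf k'}={\bf k'}!S_{\bf k'}^{(j)}$. This is the truncation statement (\ref{BinTMatTr}) applied to the column vector, and it is the only place where the monomial condition is used.

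Proposition \ref{BinSeq} then says this binomial transform equals $A_{\bf k}^j$ for $j\geq 1$, which finishes those columns.

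The only remaining check is the case $j=0$, since Proposition \ref{BinSeq} is stated only for positive $\ell$. From the generating function, $S_{\bf k}^{(0)}$ is the constant term in $t$ of $\frac{1}{{\bf k}!}e^{x_0t}\prod_j(e^{x_jt}-1)^{k_j}$. This equals $1$ if ${\bf k}={\bf 0}$ and $0$ otherwise, so the column is $(\delta_{{\bf k}{\bf 0}})_{\bf k}$. Its image under $L_R$ is $\big(\binom{\bf k}{\bf 0}\big)_{\bf k}=(1)_{\bf k}=(A_{\bf k}^0)_{\bf k}$, as required.

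All the substantive content lies in the invoked Proposition \ref{BinSeq}. If one preferred not to rely on it, it could be proved directly by multiplying the generating function by ${\bf k}!$ and writing $e^{A_{\bf k}t}=e^{x_0t}\prod_j\big((e^{x_jt}-1)+1\big)^{k_j}$. Expanding by Lemma \ref{anBin}(i) gives $\sum_{{\bf k'}\leq{\bf k}}\binom{\bf k}{\bf k'}{\bf k'}!\sum_\ell S_{\bf k'}^{(\ell)}t^\ell/\ell!$, and comparing coefficients of $t^\ell/\ell!$ yields the identity. This also handles $j=0$ uniformly.
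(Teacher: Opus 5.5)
Your proof is correct and is the same argument as the paper's: entrywise, $L_R\mathcal S_{R,\ell}=V_{R,\ell}$ is exactly the binomial-transform identity $A_{\bf k}^j=\sum_{{\bf i}\leq{\bf k}}\binom{\bf k}{\bf i}{\bf i}!S_{\bf i}^{(j)}$ of Proposition~\ref{BinSeq}. You are slightly more careful than the paper in two respects: you show that it is the monomial condition, not the choice of the order $<_T$, that turns the sum over $R$ into the full sum over ${\bf i}\leq{\bf k}$, and you check the $j=0$ column, which Proposition~\ref{BinSeq} as stated (for positive $\ell$) does not cover.
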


{\em Proof.} By Proposition \ref{BinSeq} we have
$$A_{\bf k}^\ell=\sum_{{\bf i}\leq {\bf k}}\binom{\bf k}{\bf i}{\bf i}!S_{\bf i}^{(\ell)},$$
which proves the claim. As usual we use here that the order $<_T$ is compatible with the partial order $\leq$.
\hfill$\Box$

\vspace{3mm}

\begin{example}
Let $L_R$ be the matrix given in Example \ref{Ex2.10}, and let $\ell=5$.
Using the closed formula (\ref{C2def}) the reader can check that the generalized factorial Stirling matrix is
\[\mathcal S_{R,5}=\left(\begin{array}{cccccc}
1 & x_0 & x_0^2 & x_0^3 & x_0^4 & x_0^5\\
\\
0 & x_2 & 2x_0x_2+x_2^2 & \begin{array}{c}
                          3x_0^2x_2+\\
                          3x_0x_2^2+x_2^3\\
                          \end{array} & \begin{array}{c}
                                         4x_0^3x_2+6x_0^2x_2^2\\
                                         +4x_0x_2^3+x_2^4\\
                                         \end{array}  & \begin{array}{c}
                                                         5x_0^4x_2+10x_0^3x_2^2\\
                                                         +10x_0^2x_2^3+\\
                                                         5x_0x_2^4+x_2^5\\
                                                         \end{array}\\
\\
0 & x_1 & 2x_0x_1+x_1^2 & \begin{array}{c}
                          3x_0^2x_1+\\
                          3x_0x_1^2+x_1^3\\
                          \end{array} & \begin{array}{c}
                                         4x_0^3x_1+6x_0^2x_1^2\\
                                         +4x_0x_1^3+x_1^4\\
                                         \end{array}  & \begin{array}{c}
                                                         5x_0^4x_1+10x_0^3x_1^2\\
                                                         +10x_0^2x_1^3+\\
                                                         5x_0x_1^4+x_1^5\\
                                                         \end{array}\\
\\
0 & 0 & 2x_1^2 & 6x_0x_1^2+6x_1^3 & \begin{array}{c}
                                     12x_0^2x_1^2+\\
                                     24x_0x_1^3+\\
                                     +14x_1^4\\
                                     \end{array} & \begin{array}{c}
                                                    20x_0^3x_1^2\\
                                                    +60x_0^2x_1^3\\
                                                    +70x_0x_1^4\\
                                                    +30x_1^5\\
                                                    \end{array}\\
\end{array}\right)\]
and that we have
\[L_R\mathcal S_{R,5}=\left(\begin{array}{cccccc}
1 & x_0 & x_0^2 & x_0^3 & x_0^4 & x_0^5\\
1 & x_0+x_2 & (x_0+x_2)^2 & (x_0+x_2)^3 & (x_0+x_2)^4 & (x_0+x_2)^5\\
1 & x_0+x_1 & (x_0+x_1)^2 & (x_0+x_1)^3 & (x_0+x_1)^4 & (x_0+x_1)^5\\
1 & x_0+2x_1 & (x_0+2x_1)^2 & (x_0+2x_1)^3 & (x_0+2x_1)^4 & (x_0+2x_1)^5\\
\end{array}\right)\]
\end{example}

\vspace{3mm}

\begin{remark}
In fact, we get results of the type of Theorem \ref{Decomp} whenever we have two parametric sequences of the form
\[\{a_{{\bf k},\ell}\}\ \mbox{ and }\ \{b_{{\bf k},\ell}\},\]
with ${\bf k}\in\Z^n_{\geq 0}$ and $\ell\in\Z_{\geq 0}$ satisfying the property that for any $\ell\in\Z_{>0}$, the sequence $\{b_{{\bf k},\ell}\}$ is the binomial transform of $\{a_{{\bf k},\ell}\}$. Then, defining the infinite matrices
\[A=\Big(a_{{\bf k},\ell}\Big)\ \mbox{ and }\ B=\Big(b_{{\bf k},\ell}\Big),\]
where the rows of the matrices are indexed by ${\bf k}\in\Z^n_{\geq 0}$ and the columns by $\ell\in\Z_{\geq 0}$, we have the factorization
\[LA=B,\]
or its corresponding truncated version
\[L_RA_{R,\ell}=B_{R,\ell},\]
for any set $R$ satisfying the monomial condition.
\end{remark}

\subsection{The multivariate Riordan group}

 It is a well known fact that the infinite lower-triangular (classical) Pascal matrix is an  element of the Riordan group. In fact the concept of Riordan group, or more precisely, the elements of the Riordan group (also known as Riordan arrays), were introduced as a generalization of the Pascal matrix (see \cite{Riordan} and \cite{RiordanSurvey}).

They are infinite lower-triangular matrices defined in terms of two formal power series $h(x),d(x)\in\C[[x]]$, and denoted by
\[\mathcal R\big(h(x),d(x)\big).\]
\begin{example}
The classical Pascal matrix (see Definition \ref{Pascalcl}) can be seen as an element of the Riordan group, namely
\[\mathcal R\left(\frac{1}{1-x},\frac{x}{1-x}\right).\]
\end{example}

\vspace{3mm}

Riordan arrays can equivalently be defined in terms of the so-called $A$-sequences. With this characterization it is easy to prove that the (multivariate) Pascal matrix $L$ is not an element of the (classical) Riordan group, since it does not exist an $A$-sequence for such a matrix.

\vspace{2mm}

In \cite{WW} the authors generalize the concept of Riordan arrays. They define the Riordan arrays with respect to a sequence $\{c_n\}_{n\in\N}$, so that the classical ones are those corresponding to the sequence $\{c_n=1\}_{n\in\N}$. In Theorem 5.1 in \cite{WW} the generalized Riordan arrays are characterized in terms of the classical ones, and it follows that the Pascal matrix $L$ is not a generalized Riordan array.

\vspace{2mm}

We prove here that the Pascal matrix $L$ is an element of another generalization of the notion of Riordan group, the multivariate Riordan group, introduced in \cite{CheonHuangKim1}. There the authors define the concept of Riordan basis $(G,{\bf X})$, where $G$ is an invertible power series and ${\bf X}$ is a set of variables. Defining a product for Riordan bases they prove that the set of Riordan bases has the structure of a group.

More precisely (see \cite{CheonHuangKim1} for the details), let $K[[Z_1,\ldots,Z_n]]$ be the power series ring and let $\texttt{m}=\big(Z_1,\ldots,Z_n\big)$ be its maximal ideal. Given power series $Y_1,\ldots,Y_n\in\texttt{m}$, we say that they form a set of variables if
\[{\rm det }\Big(\frac{\partial Y_j}{\partial Z_i}({\bf 0})\Big)_{i,j}\neq 0\]
Moreover, there exists a unique $K$-algebra endomorphism
\[\begin{array}{ccc}
K[[Z_1,\ldots,Z_n]] & \longrightarrow & K[[Y_1,\ldots,Y_n]]\\
\\
Z_i & \mapsto & Y_i\\
\end{array}\]
Hence, given $G=\sum_{\bf i}a_{\bf i}{\bf Z}^{\bf i}\in K[[Z_1,\ldots,Z_n]]$, we denote by $G({\bf Y})$ the image of $G$ under this endomorphism.

If $Y_1,\ldots,Y_n$ are variables, the endomorphism is an isomorphism, and for another set of variables $X_1,\ldots,X_n$,
\[{\bf X}\big({\bf Y}\big)=\big(X_1({\bf Y}),\ldots,X_n({\bf Y})\big)\]
is also a set of variables. Moreover, we denote by $\bar{\bf X}$ the preimage of ${\bf X}$ under the $K$-algebra isomorphism ${\bf Z}\mapsto{\bf X}$.

Given $n$-tuples $\big(G,{\bf Y}\big)$ of power series, where $Y_1,\ldots,Y_n$ are variables, we define the product
\begin{equation}
\big(G,{\bf X}\big)\star\big(H,{\bf Y}\big)=\big(GH({\bf X}),{\bf Y}({\bf X})\big)
\label{eqProd}
\end{equation}
If $G$ is invertible, $\big(G,{\bf X}\big)$ has an inverse
\begin{equation}
\big(\frac{1}{G(\bar{\bf X})},\bar{\bf X}\big)
\label{eqInv}
\end{equation}
Thus, the set of $\big(G,{\bf X}\big)$ with $G$ invertible and ${\bf X}$ a set of variables forms a group called the Riordan group (see Corollary 3.2 in \cite{CheonHuangKim1}).

Riordan arrays appear then as infinite matrices in a representation of this group, and will be denoted by $\mathcal R\big(G,X_1,\ldots,X_n\big)$ or simply $\mathcal R\big(G,{\bf X}\big)$,
in analogy with the classical Riordan arrays. More precisely, the relation between the matrix $\mathcal R\big(G,{\bf X}\big)=\big(a_{{\bf i}{\bf j}}\big)$ and the $n$-tuple $(G,{\bf X})$ is give by
\begin{equation}
G{\bf X}^j=\sum_{\bf i}a_{{\bf i}{\bf j}}{\bf Z}^{\bf i}
\label{eqMat}
\end{equation}

\vspace{2mm}

\begin{example}
Let $G(z_1,z_2)=\frac{1}{(1-z_1)(1-z_2)}$ an invertible power series, and $X_1(z_1,z_2)=\frac{z_1}{1-z_1}$ and $X_2(z_1,z_2)=\frac{z_2}{1-z_2}$. We have that
\[G({\bf z})=1+z_1+z_2+z_1^2+z_1z_2+z_2^2+\cdots\]
and that
\[\begin{array}{l}
X_1({\bf z})=z_1+z_1^2+z_1^3+z_1^4+\cdots\\
\\
X_2({\bf z})=z_2+z_2^2+z_2^3+z_2^4+\cdots\\
\end{array}\]
is a set of variables.

Then the matrix $\mathcal R\big(G({\bf z}),X_1({\bf z}),X_2({\bf z})\big)$ looks like
\[\left(\begin{array}{ccccccc}
1 & 0 & 0 & 0 & 0 & 0 & \cdots\\
1 & 1 & 0 & 0 & 0 & 0 & \cdots\\
1 & 0 & 1 & 0 & 0 & 0 & \cdots\\
1 & 2 & 0 & 1 & 0 & 0 & \cdots\\
1 & 1 & 1 & 0 & 1 & 0 & \cdots\\
1 & 0 & 2 & 0 & 0 & 1 & \cdots\\
1 & 3 & 0 & 3 & 0 & 0 & \cdots\\
1 & 1 & 1 & 1 & 2 & 0 & \cdots\\
\vdots & & & \vdots & & & \vdots\\
\end{array}\right)\]
where, according to (\ref{eqMat}), in the columns of the matrix appear the coefficients of the expansions of $G({\bf z}){X_1({\bf z})}^i{X_2({\bf z})}^j$ for $(i,j)\in\Z^2_{\geq 0}$. Recall that the graded reverse lexicographic order $<_T$ orders $\Z^2_{\geq 0}$ as
\[\Z^2_{\geq 0}=\{(0,0),(0,1),(1,0),(0,2),(1,1),(2,0),(0,3),(1,2),(2,1),\ldots\}\]

\end{example}

\vspace{3mm}

As pointed out already in \cite{CheonHuangKim1}, the (multivariate) Riordan matrices are not lower-triangular in general, but let us say block-wise lower-triangular matrices (once we use a total order $<_T$ on $\Z^n_{\geq 0}$ compatible with the partial order $\leq$).

\vspace{3mm}

\begin{remark}
The classical Riordan group is the univariate version of the multivariate Riordan group.
\end{remark}

\vspace{3mm}

Next we prove that the multivariate Pascal matrix $L$ belongs to the multivariate Riordan group.

\begin{proposition}
The powers of the multivariate Pascal matrix $L$ are all elements of the multivariate Riordan group. More precisely,
$$L^p=\mathcal R\big(G({\bf z}),X_1({\bf z}),\ldots,X_n({\bf z})\big)$$
 where
\[\begin{array}{l}
G({\bf z})=\frac{1}{\prod_{j=1}^n(1-pz_j)}\\
\\
X_i({\bf z})=\frac{z_i}{1-pz_i}\ \mbox{ for }1\leq i\leq n\\
\end{array}\]
and $p$ is any integer.
\label{PPascal}
\end{proposition}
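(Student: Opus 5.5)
We prove the identity column by column through (\ref{eqMat}). The matrix $\mathcal R(G,{\bf X})=(a_{{\bf i}{\bf j}})$ is determined by the expansions $G({\bf z}){\bf X}({\bf z})^{\bf j}=\sum_{\bf i}a_{{\bf i}{\bf j}}{\bf z}^{\bf i}$. So it suffices to show that for every ${\bf j}\in\Z^n_{\geq 0}$ the coefficient of ${\bf z}^{\bf i}$ in $G{\bf X}^{\bf j}$ equals the $({\bf i},{\bf j})$ entry of $L^p$. By Corollary \ref{PowersL}, applied with $R=\Z^n_{\geq 0}$ (allowed by Remark \ref{inf}), that entry is $p^{|{\bf i}|-|{\bf j}|}\binom{\bf i}{\bf j}$, with the convention that it vanishes when ${\bf j}\not\leq{\bf i}$.

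First we check that the pair is a legitimate element of the group. $G$ is invertible since $G({\bf 0})=1$. Each $X_i=z_i+pz_i^2+\cdots$ lies in the maximal ideal, and the Jacobian at ${\bf 0}$ is the identity, so ${\bf X}$ is a set of variables.

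The main computation factorizes coordinatewise:
\[G({\bf z}){\bf X}({\bf z})^{\bf j}=\prod_{m=1}^n\frac{z_m^{j_m}}{(1-pz_m)^{j_m+1}}.\]
In one variable we have
\[\frac{z^{j}}{(1-pz)^{j+1}}=\sum_{k\geq 0}\binom{j+k}{j}p^kz^{j+k}=\sum_{i\geq j}\binom{i}{j}p^{i-j}z^i.\]
This is the one-variable case of Lemma \ref{anBin} (ii), with $x$ replaced by $pz$; equivalently it is the computation in Lemma \ref{lem2}. Multiplying over $m$ gives
\[\sum_{{\bf i}\geq{\bf j}}\binom{\bf i}{\bf j}p^{|{\bf i}|-|{\bf j}|}{\bf z}^{\bf i},\]
which is exactly column ${\bf j}$ of $L^p$.

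The argument needs no restriction on $p$. For $p=0$ it gives $G=1$ and $X_i=z_i$, i.e.\ the identity matrix. For negative $p$ the series identity holds formally in $\Z[[{\bf z}]]$. As a consistency check, the case $p=1$ recovers the product of classical Pascal arrays $\mathcal R(\frac1{1-z},\frac{z}{1-z})$. Alternatively, one can prove $p=1$ and then use the group law (\ref{eqProd}) and the inverse formula (\ref{eqInv}): the composition $X_i(Y({\bf z}))$ of M\"obius maps gives $z_i/(1-(p+q)z_i)$, and $G\cdot H({\bf X})$ multiplies out correctly. For this route one must also know that the matrix representation is multiplicative, as established in \cite{CheonHuangKim1}. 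The direct coefficient comparison avoids that, so it is the route I would take.

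The main obstacle is purely one of conventions. The representation (\ref{eqMat}) puts the expansion of $G{\bf X}^{\bf j}$ in column ${\bf j}$ with row index ${\bf i}$. This must match the indexing of $L$, which is lower-triangular with entry $\binom{\bf i}{\bf j}$ in row ${\bf i}$ and column ${\bf j}$, and both must use the same order $<_T$. Once this matching is confirmed (the two-variable example with $p=1$ preceding the proposition agrees), the proof reduces to the coordinatewise one-variable expansion.
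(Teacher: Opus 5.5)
Your proposal is correct and takes essentially the same route as the paper: you read off the entries $p^{|{\bf i}|-|{\bf j}|}\binom{\bf i}{\bf j}$ of $L^p$ from Corollary~\ref{PowersL} with $R=\Z^n_{\geq 0}$, and you identify each column's generating function with $G{\bf X}^{\bf j}$ via Lemma~\ref{anBin}~(ii) under the representation (\ref{eqMat}). The only difference is that you apply that lemma one coordinate at a time with $x\mapsto pz$ and make explicit the checks (invertibility of $G$, the Jacobian condition, and the cases $p=0$ and $p<0$) that the paper merely asserts.
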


{\em Proof.} First notice that $G({\bf z})$ is an invertible power series in $K[[z_1,\ldots,z_n]]$ and that $\{X_1,\ldots,X_n\}$ is a set of variables.

The claim is a consequence of the representation of elements in the multivariate Riordan group given in (\ref{eqMat}) together with the equality in Lemma \ref{anBin} (ii). Indeed, by Corollary \ref{PowersL} applied to the set $R=\Z^n_{\geq 0}$, we have that for any ${\bf k'}\in\Z^n_{\geq 0}$ the generating function of the ${\bf k'}$-column of $L^p$ is $\sum_{{\bf k}\in\Z^n_{\geq 0}}p^{|{\bf k}|-|{\bf k'}|}\binom{\bf k}{\bf k'}{\bf x}^{\bf k}$. We have
\[\begin{array}{ll}
\sum_{{\bf k}\in\Z^n_{\geq 0}}p^{|{\bf k}|-|{\bf k'}|}\binom{\bf k}{\bf k'}{\bf x}^{\bf k} & ={\bf x}^{\bf k'}\sum_{{\bf k}\geq{\bf k'}}\binom{\bf k}{\bf k'}(p{\bf x})^{{\bf k}-{\bf k'}}\\
\\
 & =\frac{1}{1-px_1}\cdots\frac{1}{1-px_n}\left(\frac{x_1}{1-px_1}\right)^{k_1'}\cdots\left(\frac{x_n}{1-px_n}\right)^{k_n'}\\
\\
 &= G({\bf z}){\bf X}^{\bf k'}\\
 \end{array}\]
 as we wanted to prove.
\hfill$\Box$

\vspace{3mm}

\begin{example}
By Proposition \ref{PPascal} the Pascal matrix $L$ is the matrix representing the Riordan basis $\big(G,{\bf X}\big)$, where $G=\frac{1}{\prod_{j=1}^n(1-z_j)}$ and ${\bf X}={\bf X_1}\cdots{\bf X_n}$ with ${\bf X_i}=\frac{z_i}{1-z_i}$. By (\ref{eqInv}) its inverse is
\[\big(G,{\bf X}\big)^{-1}=\Big(\frac{1}{G(\bar {\bf X})},\bar{\bf X}\Big)\]
where $\bar{\bf X}$ is the compositional inverse of ${\bf X}$. Hence we deduce the inverse of the Pascal matrix is
$$L^{-1}=\mathcal R\Big(\frac{1}{\prod_{j=1}^n(1+z_j)},\frac{z_1}{1+z_1},\ldots,\frac{z_n}{1+z_n}\Big)$$
since the compositional inverse of $\frac{z_i}{1-z_i}$ is $\frac{z_i}{1+z_i}$.

Of course, this agrees with the description of $L_R^{-1}$ given in Lemma \ref{inv} for $R=\Z^n_{\geq 0}$.
\end{example}

\vspace{5mm}

\begin{remark}
Notice that the matrices $L_R$ associated with sets of the form
\[R=\left\langle{\bf e_{i_1}},\ldots,{\bf e_{i_r}}\right\rangle\]
are elements of the multivariate Riordan group over $\C[[x_1,\ldots,x_r]]$.
\end{remark}

\end{document}